\documentclass[12pt]{article}

\usepackage{latexsym,amssymb,amsgen,amsmath,amsxtra, amsfonts, amsthm, amscd}
\usepackage{bbm}
\usepackage{graphics}

\usepackage{graphicx}

\usepackage{subfigure}

\usepackage{authblk}

\usepackage[usenames]{color}

\usepackage{amsthm}

\textwidth6.3in

\textheight8in

\topmargin-0.5in

\oddsidemargin0.15in

\evensidemargin0in

\definecolor{darkgreen}{rgb}{0.0,0.5,0.0}

\newtheorem{theorem}{Theorem}
\newtheorem{corollary}[theorem]{Corollary}
\theoremstyle{remark}
\newtheorem{definition}[theorem]{Definition}
\theoremstyle{remark}
\newtheorem{example}[theorem]{{\rm Example}}
\newtheorem{algorithm}[theorem]{Algorithm}
\newtheorem{remark}[theorem]{Remark}

\newcommand{\reach}{\mathrm{reach}}

\def\be{\begin{equation}}
\def\ee{\end{equation}}
\def\ba{\begin{array}}
\def\ea{\end{array}}

\def\bs{\begin{slide}}
\def\es{\end{slide}}
\def\bc{\begin{center}}
\def\ec{\end{center}}

\allowdisplaybreaks[1]

\begin{document}

\title{Higher Order Methods for Differential Inclusions }

\author{Sanja Gonzalez \v{Z}ivanovi\'{c} and Pieter Collins}
\maketitle

\section{Introduction}

 In this paper, we compute reachable sets of differential inclusions,

\be\label{di1}
 \dot x(t)\in F(x(t)),\,\,x(0)=x_0,
\ee

\noindent where $F$ is a continuous set-valued map with compact and convex values.
A solution of the differential inclusion~\eqref{di1} is an
absolutely continuous function $x:[0,T]\rightarrow\mathbb{R}^n$, such that for almost all $t\in [0,T]$,
$x(\cdot)$ is differentiable at $t$ and $\dot x(t)\in F(x(t))$.
The solution set $S_T(x_0) \subset C([0,T],\mathbb{R}^n)$ is defined as
\begin{multline*}
S_T(x_0)=\{x(\cdot) \in C([0,T],\mathbb{R}^n) \,\mid\, x(\cdot)\text{ is a solution of } \dot x(t)\in F(x(t))  \text{ with } x(0)=x_0  \}.
\end{multline*}
The \emph{reachable set} at time $t$, $R(x_0,t) \subset \mathbb{R}^n$, is defined as
\begin{equation*}
R(x_0,t)=\{x(t)\in\mathbb{R}^n\,| x(\cdot) \in S_t(x_0) \} .
\end{equation*}

\noindent In particular, we are interested in higher-order method for computation
of a rigorous over-approximation of the reachable set
of a differential inclusion.

Differential inclusions are generalization of differential equations
having multivalued right-hand sides, see \cite{AC}, \cite{De}, \cite{S}.
They give a mathematical setting for studying differential equations
with discontinuous right-hand sides. In fact, taking a closed, convex hull of the right-hand side,
one obtains a differential inclusion. Solutions of this differential inclusion are known as Fillipov solutions of the original differential equation; see \cite{F}.

One important application area for differential inclusions is control theory.
Suppose we are given an interval $[0,T]$, and absolutely continuous function $x(\cdot)$
which satisfies the inclusion~\eqref{di1}, where $F(x)= f(x,U)=\bigcup_{u\in U} f(x(t),u)$ for almost all $t\in [0,T]$.
It is known that if the set $U$ is compact and separable,
$f$ is continuous, and $f(x,U)$ is convex for all $x$, then there exists a bounded measurable function $u(t)\in U$,
known as admissible control input, such that $x(t)$ is the solution of the control system,
\be\label{controlsystem}
 \dot x(t) = f(x(t),u(t)),\,\,\,x(0)=x_0.
\ee
\noindent The proof of the above is given in \cite{AC}, and with slight changes
in the assumptions, also in \cite{N} or \cite{Li}.
On the other hand, it is easy to see that each solution of
a control system~\eqref{controlsystem} for a given admissible control input
is also a solution of a differential inclusion~\eqref{di1}.
Therefore, if a control system is not completely controllable
one may want to compute reachable sets corresponding to all possible inputs ($u(t)\in U$)
which is equivalent to computing a reachable set of a differential inclusion.

Similarly, we obtain a differential inclusion from a noisy system of differential equations
\be\label{noisysystem}
 \dot x(t) = f(x(t),v(t)),\quad x(0)=x_0,\quad v(t)\in V.
\ee
Although the form of~\eqref{controlsystem} and~\eqref{noisysystem} are identical, the interpretation is different; 
in~\eqref{controlsystem}, the input $u(t)$ can be chosen by the designer, whereas in~\eqref{noisysystem}, the input is determined by the environment.

 Differential inclusions can also arise as reduced models of high-dimensional systems of differential equations.
For example, suppose we have a large-scale system given in the form
of differential equation $\dot x(t)=f(x(t))$. In general, it is very
hard to analyse large-scale systems and most of the times performing model reduction
is necessary. This gives a simplified model in the form of $\dot z(t) = h(z(t)) + e(t)$,
where $|e(t)|< \epsilon$ represents the error that occurred while simplifying the model.

 For reliability purposes many engineering systems require availability of
verification tools. In order to verify a system, we must
guarantee that an approximate solution will contain the actual solution of the system.
If there is uncertainty in the system, lack of controllability, or just a variety of available dynamics,
one needs to use differential inclusion models.
For verification purposes, one needs to compute over-approximations to the set of solutions.

 An important tool in the study of input-affine control systems~\eqref{controlsystem} is based on the Fliess 
expansion~\cite{Fl}, in which the evolution over a time-step $h$ is expanded as a power-series in integrals of the input.
A numerical method based on this approach was given in~\cite{GK}.
The method cannot be directly applied to study noisy systems~\eqref{noisysystem}, since for this problem we 
need to compute the evolution over all possible inputs, and this point is only briefly addressed.

The first result on the computation of the solution set of a differential inclusion
was given in~\cite{PBV}, who considered Lipschitz differential inclusions,
and gave a polyhedral method for obtaining an approximation
of the solution set $S(x_0)$ to an arbitrary known accuracy.
In the case where $F$ is only upper-semicontinuous with compact, convex values,
it is possible to compute arbitrarily accurate over-approximations to the solution set,
as shown in~\cite{CG}.

 Some different techniques and various types of numerical methods have been proposed
as approximations to the solution set of a differential inclusion. For example,
ellipsoidal calculus was used in \cite{KV}, a Lohner-type algorithm
in \cite{KZ}, grid-based methods in \cite{PBV} and \cite{BR}, optimal control
in \cite{BM} and discrete approximations in \cite{DL,Dt,DF}, \cite{G}.
However, these algorithms either do not give rigorous over-approximations, or are
approximations of low-order (e.g. Euler approximations with a first-order single-step truncation error).
Essentially, the only algorithms mentioned above that could give arbitrary accurate error estimates are the ones that use grids.
However, higher order discretization of a state space greatly
affects efficiency of the algorithm. It was noted in \cite{BR} that if one is trying to obtain higher order
error estimates on the solution set of differential inclusions then grid methods should be avoided.

 In order to provide an over-approximation of the reachable set of (\ref{di1}),
we compute solutions of an ``approximate'' system
\[
\dot y(t)= f(y(t),w_k(t)), \quad y(t_k)=x(t_k), \  w_k(\cdot)\in W,
\]

\noindent for $t\in [t_k,t_{k+1}]$, and add the uniform error bound on the difference of the two solutions.
We provide formulas for the local error based on Lipschitz constants and bounds on higher-order derivatives.
The method is based on a Fliess-like expansion, and extends the results of~\cite{GK} by providing error estimates which are valid for all possible inputs.

 We can obtain improved estimates by the use of the logarithmic norm.
The logarithmic norm was introduced independently in \cite{D}, and \cite{L} in order to derive
error estimates to initial value problems, see also \cite{Sg}. Using the logarithmic norm
is advantageous over the use of Lipschitz constant in the sense that the logarithmic norm
can have negative values, and thus, one can distinguish between forward and reverse time integration,
and between stable and unstable systems. The definition of the logarithmic norm and
a theorem on the logarithmic norm estimate is given in Section \ref{Prel}.

 The numerical result given in Section \ref{num} were obtained using the function calculus
implemented in the tool Ariadne~\cite{A} for reachability analysis and verification of hybrid systems.
In particular, we use \emph{polynomial models} for the rigorous approximation of continuous functions.
Polynomial model expresses approximations to a function in the form of a polynomial (defined over a suitably small domain) plus an interval remainder, and are essentially the same as the \emph{Taylor models} of~\cite{RBM}.

 The paper is organized as follows. In Section \ref{Prel}, we give key ingredients
of the theory used. In Section \ref{Di}, we give mathematical setting for obtaining
over-approximations of the reachable sets of a differential inclusion, and propose an algorithm.
In Section \ref{IAS}, we consider differential inclusions in the form of input-affine systems. We derive
the local error, give formulas for obtaining the error of second and third orders, and
show how to obtain the error of higher-orders. We extend the idea of obtaining over-approximations
for input-affine systems to more general
differential inclusions in Section \ref{GDI}. A
numerical example is given in Section \ref{num}.
We conclude the paper with a discussion on the theory proposed in Section \ref{Disc}.

\section{Preliminaries}\label{Prel}

 Below we give several results on differential inclusions and
the computability of their solutions. For further work on the theory of differential inclusions see
\cite{AC}, \cite{De}, \cite{S}, for computability theory see \cite{W}, and for results on computability
of differential inclusions see \cite{PBV}, \cite{CG}.

We canonically use the supremum norm for the vector norm in $\mathbb{R}^n$, i.e.,
for $x \in \mathbb{R}^n$, $\|x\|_\infty=\max \{|x_1|,...,|x_n|\}$.
The corresponding norm for functions $f: D\subset \mathbb{R}^n \rightarrow \mathbb{R}$ is $\|f\|_\infty = \sup_{x\in D}\|f(x)\|_\infty$.
The corresponding matrix norm is
\[
 \|Q\|_{\infty} = \max_{k=1,...,n} \Bigl \{ \sum_{i=1}^n |q_{ki}| \Bigr\}.
\]

Given a square matrix $Q$ and a matrix norm $\|\cdot\|$, the corresponding \emph{logarithmic norm} is
\[
 \lambda(Q)= \lim_{h\rightarrow 0^+} \frac{\|I + hQ\|-1}{h} .
\]
\noindent There are explicit formulas for the logarithmic norm for several matrix norms,
see \cite{HNW}, \cite{D}. The formula for the logarithmic norm corresponding to the uniform matrix norm that we use is
\[
 \lambda_\infty(Q) = \max_k \{ q_{kk} + \sum_{i\neq k} |q_{ki}| \}.
\]

The following theorem on existence of solutions of differential inclusions and its proof can be
found in \cite{De}. Also, a version of the theorem and its proof can be found in \cite{AC}.
\begin{theorem}
Let $D\subset\mathbb{R}^n$ and $F:[0,T]\times D\rightrightarrows \mathbb{R}^n$
be an upper semicontinuous set-valued mapping, with non-empty, compact and convex values.
Assume that $\|F(t,x))\|\le c(1+\|x\|)$, for some constant $c$, is satisfied on $[0,T]$.
Then for every $x_0 \in D$, there exists an
absolutely continuous function $x:[0,T] \rightarrow \mathbb{R}^n$, such that
$x(t_0)=x_0$ and $\dot x(t)\in F(t,x(t))$ for almost all $t\in [0,T]$.
\end{theorem}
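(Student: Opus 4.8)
The plan is to establish the result by the classical Euler polygonal approximation scheme followed by a compactness argument, in the spirit of the proofs in \cite{De} and \cite{AC}. First I would fix a sequence of partitions of $[0,T]$ with mesh tending to zero, say $\Delta_m = \{0 = t_0^m < t_1^m < \dots < t_{N_m}^m = T\}$, and on each partition build an approximate solution $x_m(\cdot)$: set $x_m(0) = x_0$, pick any selection $v_k^m \in F(t_k^m, x_m(t_k^m))$ (nonempty since $F$ has nonempty values), and define $x_m$ to be affine on $[t_k^m, t_{k+1}^m]$ with slope $v_k^m$. The linear growth bound $\|F(t,x)\| \le c(1+\|x\|)$ is what makes this well-defined on all of $[0,T]$: a discrete Gronwall estimate shows $\|x_m(t)\| \le (1+\|x_0\|)e^{cT} =: M$ uniformly in $m$ and $t$, so the iterates never escape to infinity in finite time. (If $D \neq \mathbb{R}^n$ one must assume the relevant tube stays in $D$, or that $D$ is closed and the bound keeps us inside; I would state this as needed.)

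Next I would extract a convergent subsequence. Since $\|\dot x_m(t)\| \le c(1+M)$ for a.e.\ $t$, the family $\{x_m\}$ is uniformly bounded and uniformly Lipschitz, hence equicontinuous; by Arzel\`a--Ascoli a subsequence converges uniformly to some $x \in C([0,T],\mathbb{R}^n)$, which is Lipschitz with the same constant and therefore absolutely continuous, with $x(0) = x_0$. The derivatives $\dot x_m$, viewed in $L^\infty([0,T],\mathbb{R}^n) \subset L^1$, are bounded, so passing to a further subsequence they converge weakly in $L^1$ (or weak-$*$ in $L^\infty$) to some $g \in L^1$; and since $x_m \to x$ uniformly one identifies $g = \dot x$ a.e.\ by integrating against test functions.

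The main obstacle — and the only genuinely nontrivial step — is showing $\dot x(t) \in F(t,x(t))$ for a.e.\ $t$. Weak convergence alone does not place $\dot x(t)$ in $F(t,x(t))$ pointwise; one needs a convexity argument. I would use the Convergence Theorem / Mazur's lemma: because $F$ has convex values and is upper semicontinuous, for any $\varepsilon > 0$ and a.e.\ $t$ there is a neighbourhood on which $F$ is contained in $F(t,x(t)) + \varepsilon B$; for $m$ large the polygonal slopes $\dot x_m$ near $t$ lie in this enlarged convex set, so finite convex combinations of tails of $\{\dot x_m\}$ do too, and Mazur's lemma produces such combinations converging strongly (hence a.e.\ along a subsequence) to $\dot x$. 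Letting $\varepsilon \to 0$ and using that $F(t,x(t))$ is closed gives $\dot x(t) \in F(t,x(t))$ a.e. The technical care here is in handling the ``a.e.'' sets uniformly and in invoking upper semicontinuity jointly in $(t,x)$; I would isolate this as the key lemma and cite \cite{AC} or \cite{De} for the measurable-selection and convergence-theorem machinery rather than reprove it.
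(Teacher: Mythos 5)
The paper gives no proof of this theorem at all---it simply defers to \cite{De} and \cite{AC}---and your Euler-polygon/Arzel\`a--Ascoli/Convergence-Theorem argument is precisely the standard proof found in those references, with the one genuinely delicate step (upper semicontinuity plus convexity plus Mazur's lemma to place $\dot x(t)$ in $F(t,x(t))$ almost everywhere) correctly identified and handled. Your proposal is correct and takes essentially the same approach as the paper's cited sources.
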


\smallskip

\noindent A result on upper-semicomputability of differential inclusions was presented in~\cite{CG}.

\begin{theorem}
Let $F$ be an upper-semicontinuous multivalued function with compact and
convex values. Consider the initial value problem
$\dot x\in F(x)$, $x(0) = x_0$,
where F is defined on some open domain $V\subset \mathbb{R}^n$. Then the solution operator
$x_0 \mapsto S_T(x_0)$ is upper-semicomputable in the following sense:
 – Given an enumerator of all tuples $(L,M_1,...,M_m)$ such that $F(\bar{L})\subset \cup_{i=1}^{m} M_i$, it is possible to enumerate all tuples $(I, J,K_1, . . .,K_k)$
where $I,K_1, . . . , K_m$ are open rational boxes and $J$ is an open rational interval
such that for every $x_0 \in I$, every solution $\xi$ with $\xi(0) = x_0$ satisfies
$\xi(\bar{J})\subset \cup_{i=1}^k K_i$.
\end{theorem}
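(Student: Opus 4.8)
The plan is to realize the solution operator as a limit of rigorous Euler‑type over‑approximations and to let the enumerated output consist precisely of the tuples that such an over‑approximation certifies, as the rational step size, the subdivision of $[0,T]$, and the box data range over all admissible choices. A name for the upper‑semicontinuous map $F$ is the given enumerator of tuples $(L,M_1,\dots,M_m)$ with $F(\bar L)\subset\bigcup_i M_i$ (all $L,M_i$ rational boxes, and $F(\bar L)=\bigcup_{x\in\bar L}F(x)$); from it we produce a recursively enumerable list of tuples $(I,J,K_1,\dots,K_k)$, each \emph{sound} (the stated inclusion holds for every solution), the list being \emph{complete} in the sense needed for upper‑semicomputability (for every $x_0$ and every open neighbourhood of the solution tube of $S_T(x_0)$ it contains a tuple witnessing that neighbourhood). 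The argument has three ingredients: a one‑step enclosure, its iteration together with soundness, and the convergence that yields completeness.

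\emph{The one‑step enclosure.} Fix a rational box $B$, a rational $h>0$, and work inside a compact $K\subset V$ containing the solutions under consideration up to time $T$; on $K$ the map $F$ is bounded, being upper‑semicontinuous with compact values. Dovetailing over all rational boxes $\hat B$ and all tuples appearing in the enumeration of $F$, search for $\hat B$ with $B\subseteq\hat B\subset K$ and a tuple with $\overline{\hat B}\subseteq\bar L$ and $B+[0,h]\cdot\mathrm{hull}\bigl(\bigcup_i M_i\bigr)\subseteq\hat B$, where $\mathrm{hull}$ is the box hull; this is a decidable inclusion of rational boxes, and boundedness of $F$ on $K$ guarantees by a standard rough‑enclosure argument that such $\hat B$ exists once $h$ is small relative to $B$, so the search terminates. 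When it succeeds, every solution $\xi$ with $\xi(t_0)\in B$ stays in $\hat B$ on $[t_0,t_0+h]$, and since $\dot\xi(s)\in F(\xi(s))\subseteq F(\overline{\hat B})$ for a.e.\ $s$, the integral mean‑value inclusion gives $\xi(t_0+h)\in\xi(t_0)+h\,\overline{\mathrm{co}}\,F(\overline{\hat B})\subseteq B+h\,\mathrm{hull}\bigl(\bigcup_i M_i\bigr)=:B'$, again a rational box. This is the rigorous Euler step $B\mapsto(\hat B,B')$.

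\emph{Iteration and soundness.} Given a rational box $I$ and a uniform subdivision $0=t_0<\dots<t_N=T$, apply the step from $B_0\supseteq I$ to obtain boxes $B_0,\dots,B_N$ and enclosures $\hat B_0,\dots,\hat B_{N-1}$; then the tube $\bigcup_j\bar J_j\times\hat B_j$, with $J_j=(t_j,t_{j+1})$, contains the graph of every solution starting in $I$, because by construction $\xi(t_j)\in B_j$ for all $j$ and $\xi$ remains in $\hat B_j$ on $\bar J_j$. For a rational interval $J$, enumerate the tuple $(I,J,K_1,\dots,K_k)$ whenever $\bigcup\{\hat B_j:\bar J\cap\bar J_j\neq\emptyset\}\subseteq\bigcup_i K_i$; since the finite rational‑box covers of a finite union of rational boxes are themselves enumerable, every tuple certified this way is eventually listed, and soundness is immediate from the previous step.

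\emph{Completeness (the main point).} It remains to show that for every $x_0$ and every open $U\supseteq\{(t,\xi(t)):\xi\in S_T(x_0),\ t\in[0,T]\}$ there are a rational box $I\ni x_0$ and a subdivision fine enough that the Euler tube lies inside $[0,T]\times U$; then the tuple with $J$ a rational neighbourhood of $[0,T]$ and the $K_i$ a rational cover of the tube inside $U$ is valid and gets enumerated, which is exactly the asserted upper‑semicomputability. This reduces to the convergence $\hat B_j\to$ (solution tube) in Hausdorff distance as $N\to\infty$ and $\operatorname{diam}(I)\to0$, the classical Wa\.zewski/Filippov property of the Euler scheme for differential inclusions: upper semicontinuity of $F$ forces the per‑step overshoot of $\mathrm{hull}\,F(\overline{\hat B}_j)$ over $F(\xi(t_j))$ to be controlled by the modulus of upper semicontinuity and hence to vanish in the limit, while convexity of the values of $F$ ensures that uniform limits of the Euler polygonal arcs are genuine solutions (without convexity the scheme converges to the relaxed solution set and soundness would fail for the stated tuples). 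I expect this convergence, together with the bookkeeping making it effective and uniform in the name of $F$ — in particular extracting a usable compact $K$ and step $h$ from the enumerator alone — to be the main obstacle; the enclosure arithmetic and the enumeration machinery are routine by comparison.
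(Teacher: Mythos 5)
The paper does not prove this statement: it is quoted as a preliminary result from \cite{CG}, whose ``ten thousand monkeys'' argument enumerates candidate tuples together with candidate finite certificates (chains of space--time boxes) and outputs a tuple whenever a certificate is verified, the mathematical substance being a compactness/upper-semicontinuity argument that every valid tuple admits such a finite certificate. Your proposal substitutes a constructive Euler enclosure for this generate-and-test scheme. The soundness half of your argument is essentially fine (the first-exit-time argument for the rough enclosure $\hat B$ and the integral mean-value inclusion both work), but there are genuine gaps on the completeness side.

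The decisive one is your choice of increment set. You propagate $B'=B+h\,\mathrm{hull}\bigl(\bigcup_i M_i\bigr)$ with $\mathrm{hull}$ the \emph{box} hull, whereas the mean-value inclusion only requires $h\,\overline{\mathrm{co}}\,F(\overline{\hat B})$. The Hausdorff gap between the box hull and the convex hull of the values of $F$ is a fixed positive constant whenever those values are not themselves boxes: for $F(x)\equiv\{(s,s):s\in[-1,1]\}$ the true time-$T$ reachable set from a point is a diagonal segment of length $2T$, while your scheme produces $x_0+T[-1,1]^2$ for \emph{every} mesh, since the per-step overestimate $h\cdot d_H(\mathrm{hull},\overline{\mathrm{co}})$ accumulates to order $T$ independently of $h$. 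Hence the Euler tube converges to the reachable set of the relaxation $\dot x\in\mathrm{hull}(F(x))$, not of $\dot x\in F(x)$, and no refinement brings it inside a given neighbourhood of the true solution tube; completeness fails as written. The repair is to propagate rational polytopes $\overline{\mathrm{co}}\bigl(\bigcup_i M_i\bigr)$ (Minkowski sums of which remain computable) or to re-cover and branch at each step. Even after that repair, the remaining completeness claim --- that the refined tubes shrink onto the solution tube for merely upper-semicontinuous $F$, via Arzel\`a--Ascoli plus the closed-graph/convexity argument that limits of Euler polygons are genuine solutions --- is precisely the content you defer as ``the main obstacle''; it is the heart of the theorem, not bookkeeping. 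Finally, note that the statement asks to enumerate \emph{all} valid tuples $(I,J,K_1,\ldots,K_k)$: your construction lists only those certified by some Euler run, which yields upper-semicomputability in the weaker ``arbitrarily tight over-approximations'' sense but is not literally the claimed recursively enumerable set, whereas the generate-and-test structure of \cite{CG} is designed exactly to deliver that stronger conclusion.
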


In other words, it is possible to approximate the reachable sets arbitrarily accurately given a description of the differential inclusion and an arbitrarily accurate description of the initial state.

\smallskip

 The basic construction of our algorithm is based on the following theorem. The theorem and the proof can be found
in \cite[Corollary 1.14.1]{AC}.

\begin{theorem}
 Let $f:X\times U\rightarrow X$ be continuous where $U$ is a compact separable
metric space and assume that there exists an interval $I$ and an absolutely continuous
$x:I\rightarrow \mathbb{R}^n$, such that for almost all $t\in I$,
\[
 \dot x(t)\in f(x(t),U).
\]
\noindent Then there exists a Lebesgue measurable $u:I\rightarrow U$ such that for almost all $t\in I$,
\[
 \dot x(t)= f(x(t),u(t)).
\]

\end{theorem}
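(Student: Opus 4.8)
The statement is a measurable-selection result of Filippov implicit-function type, so the plan is: on a set of full measure, describe the multifunction whose value at $t$ is the set of admissible controls at that time, show it is measurable with closed values, and then apply a measurable selection theorem. Write $I_0\subseteq I$ for the set of $t$ at which $x$ is differentiable and $\dot x(t)\in f(x(t),U)$ (we understand throughout that $x$ takes values in $X$, so that $f(x(t),u)$ is defined); by hypothesis together with absolute continuity of $x$, the complement $I\setminus I_0$ is Lebesgue-null. For $t\in I_0$ set
\[
 G(t)=\{\,u\in U : f(x(t),u)=\dot x(t)\,\}.
\]
First I would record the structural facts: $U$, being a compact metric space, is Polish; and for each $t\in I_0$ the set $G(t)$ is nonempty — this is precisely the hypothesis $\dot x(t)\in f(x(t),U)$ — and compact, since it is the preimage of the singleton $\{\dot x(t)\}$ under the continuous map $u\mapsto f(x(t),u)$ on the compact space $U$.

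The real work is to show that $t\mapsto G(t)$ is a measurable multifunction. I would introduce the scalar map $\varphi(t,u)=\|f(x(t),u)-\dot x(t)\|_\infty$ and check that it is of Carathéodory type: for fixed $u$, $t\mapsto f(x(t),u)$ is continuous (the continuous $f$ composed with the continuous $x$) and $\dot x$ is defined a.e.\ and measurable, so $t\mapsto\varphi(t,u)$ is measurable; for fixed $t$, $u\mapsto\varphi(t,u)$ is continuous. Fixing a countable dense set $\{u_j\}\subset U$, the marginal function $m(t)=\min_{u\in U}\varphi(t,u)=\inf_j\varphi(t,u_j)$ is then measurable, and $m(t)=0$ for every $t\in I_0$. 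Consequently $G(t)=\{u\in U:\varphi(t,u)\le m(t)\}$ on $I_0$, and measurability of this argmin-type multifunction of a Carathéodory integrand is standard; one clean route is the Scorza--Dragoni theorem, which furnishes an increasing sequence of compact sets $K_\ell\subseteq I_0$ with $|I_0\setminus\bigcup_\ell K_\ell|=0$ on each of which $\varphi$ is jointly continuous. On $K_\ell$ the multifunction $G$ then has closed graph and compact values, hence is upper semicontinuous and in particular Borel measurable, and measurability on all of $I_0$ follows.

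With this in hand I would invoke the Kuratowski--Ryll-Nardzewski selection theorem (equivalently, Filippov's lemma): a measurable, closed-valued multifunction from a measurable subset of $\mathbb{R}$ into a Polish space admits a measurable selection. Applying it to $G$ yields a Lebesgue-measurable $u:I_0\to U$ with $f(x(t),u(t))=\dot x(t)$ for all $t\in I_0$; extending $u$ to $I\setminus I_0$ by any constant value in $U$ keeps it measurable and gives $\dot x(t)=f(x(t),u(t))$ for almost every $t\in I$, which is the claim. The one genuinely delicate step is the measurability of $G$: because $G$ is cut out by an \emph{equality} constraint, one cannot simply read off weak measurability from a single sublevel set, and a regularization — Scorza--Dragoni as above, or the classical device of approximating $\dot x$ by simple functions and passing to the limit — is what makes the argument go through. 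Everything else is bookkeeping, and since the result is quoted from \cite[Corollary 1.14.1]{AC} one could alternatively simply cite it.
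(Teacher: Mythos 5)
The paper does not prove this statement at all: it is quoted verbatim from the literature, with the proof attributed to \cite[Corollary~1.14.1]{AC}, so there is no in-paper argument to compare against. Your blind proof is a correct, self-contained version of the standard measurable-selection argument that underlies that citation (Filippov's implicit-function lemma): the Carath\'eodory structure of $\varphi(t,u)=\|f(x(t),u)-\dot x(t)\|_\infty$, the Scorza--Dragoni regularization to get measurability of the equality-constrained multifunction $G$, and Kuratowski--Ryll-Nardzewski to select. The only cosmetic point is that the selection you obtain lives on $\bigcup_\ell K_\ell$ rather than on all of $I_0$, so the constant extension must also cover the null set $I_0\setminus\bigcup_\ell K_\ell$; this does not affect the ``for almost all $t\in I$'' conclusion. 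As you note yourself, for the purposes of this paper one would simply cite the reference, but your argument would stand on its own.
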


\smallskip

 We shall need the multidimensional mean value theorem,
which can be found in standard textbooks on real analysis,
e.g., see \cite{Wa}. We use the following form of the theorem.

\begin{theorem}\label{MVT}
Let $V\subset \mathbb{R}^n$ be open, and suppose that $f:\mathbb{R}^n \rightarrow \mathbb{R}^m$
is differentiable on V. If $x,x+h\in V$ and $L(x;x+h)\subseteq V$, i.e., line between $x$ and $x+h$
belongs to $V$,
\[
 f(x+h)-f(x) = \int_{0}^{1}  Df(z(s)) ds\,\cdot  h\,
\]
\noindent where $Df$ denotes Jacobian matrix of $f$,  $z(s)=x+sh$, and integration is understood component-wise.
\end{theorem}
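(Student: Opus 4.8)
The plan is to reduce the multidimensional statement to the one-variable fundamental theorem of calculus by restricting $f$ to the line segment joining $x$ and $x+h$. Parametrise the segment by $z(s)=x+sh$ for $s\in[0,1]$; since $L(x;x+h)\subseteq V$, we have $z(s)\in V$ for every $s$, so the composition $f\circ z:[0,1]\to\mathbb{R}^m$ is well defined. Writing $f=(f_1,\dots,f_m)$ component-wise and setting $\varphi_i(s)=f_i(z(s))$, each $\varphi_i$ is a real-valued function of one real variable, which is the natural setting in which to invoke the classical one-dimensional results.

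Next I would apply the chain rule. Because $f$ is differentiable at $z(s)\in V$ and $s\mapsto z(s)$ is affine with constant derivative $h$, each $\varphi_i$ is differentiable on $[0,1]$ (with one-sided derivatives at the endpoints) and $\varphi_i'(s)=\nabla f_i(z(s))\cdot h=\bigl(Df(z(s))\,h\bigr)_i$. Applying the fundamental theorem of calculus to each $\varphi_i$ gives
\[
 f_i(x+h)-f_i(x)=\varphi_i(1)-\varphi_i(0)=\int_0^1 \bigl(Df(z(s))\,h\bigr)_i\,ds .
\]
Reassembling the $m$ components yields $f(x+h)-f(x)=\int_0^1 Df(z(s))\,h\,ds$, and since $h$ is a constant vector and matrix--vector multiplication is linear, the constant factor $h$ may be pulled outside the component-wise integral, giving $f(x+h)-f(x)=\bigl(\int_0^1 Df(z(s))\,ds\bigr)\cdot h$, which is the claimed identity.

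The step that needs care — and which I expect to be the main obstacle — is the application of the fundamental theorem of calculus: under the bare hypothesis that $f$ is differentiable on $V$, the Jacobian $Df$ need not be continuous, so $\varphi_i'$ need not be Riemann-integrable in the elementary sense, and one must also check that $s\mapsto Df(z(s))$ is integrable so that the right-hand side is meaningful. The clean way out, and the one appropriate here since the theorem is used only for deriving error bounds, is to assume (as is implicitly intended) that $Df$ is continuous along the segment: then $\varphi_i'$ is continuous on the compact interval $[0,1]$, it is Riemann-integrable, the elementary fundamental theorem of calculus applies directly, and the matrix-valued integral on the right is well defined as a continuous integrand. If one insists on only pointwise differentiability, the same conclusion still holds by invoking the version of the fundamental theorem of calculus valid for arbitrary derivatives (in the Henstock--Kurzweil sense), but this refinement is not needed for the applications in the sequel. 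Everything else — the chain rule, and interchanging the finite linear combination defining the matrix--vector product with the integral — is routine.
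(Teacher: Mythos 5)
Your proof is correct and is the standard textbook argument (segment parametrisation, chain rule, componentwise fundamental theorem of calculus); the paper itself gives no proof of this statement, simply citing a real-analysis textbook, so there is nothing to diverge from. Your caveat about integrability of $Df$ under mere pointwise differentiability is well taken but harmless here, since every application of the theorem in the paper assumes $f$ and the $g_i$ are at least $C^1$, so $s\mapsto Df(z(s))$ is continuous on $[0,1]$ and the elementary fundamental theorem of calculus suffices.
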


\smallskip

The following theorem on the logarithmic norm estimate is taken from \cite{HNW}.
\begin{theorem}\label{lnt}
 Let $x(t)$ satisfy differential equation $\dot x(t)=f(t,x(t))$ with $x(t_0)=x_0$, where
$f$ is Lipschitz continuous. Suppose that
there exist functions $l(t)$, $\delta(t)$ and $\rho$ such that
$\lambda(Df(t,z(t)))\le l(t)$ for all $z(t)\in \mathrm{conv}\{x(t),y(t)\}$ and
$\|\dot y(t) - f(t,y(t))\|\le \delta(t)$, $\|x(t_0)-y(t_0)\|\le \rho$.
Then for $t\ge t_0$ we have
\[
\|y(t)-x(t)\| \le e^{\int_{t_0}^t l(s)ds}\left( \rho + \int_{t_0}^{t} e^{-\int_{t_0}^s l(r)dr} \delta(s) ds \right).
\]
\end{theorem}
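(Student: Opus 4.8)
The plan is to derive a differential inequality for the norm of the error $e(t) = y(t) - x(t)$ and then apply the standard (scalar) Grönwall-type integration, which is exactly the form of the conclusion. First I would introduce $m(t) = \|e(t)\|$ and compute, for almost every $t$, an upper bound for its upper right Dini derivative $D^+ m(t)$. Writing $\dot e(t) = \dot y(t) - \dot x(t) = \bigl(\dot y(t) - f(t,y(t))\bigr) + \bigl(f(t,y(t)) - f(t,x(t))\bigr)$, the first bracket has norm at most $\delta(t)$ by hypothesis. For the second bracket I would apply the multidimensional mean value theorem (Theorem~\ref{MVT}) to $f(t,\cdot)$ along the segment from $x(t)$ to $y(t)$, giving $f(t,y(t)) - f(t,x(t)) = \Bigl(\int_0^1 Df(t,z(s))\,ds\Bigr)\,e(t)$ with $z(s) = x(t) + s\,e(t) \in \mathrm{conv}\{x(t),y(t)\}$.

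The key analytic step is the link between the logarithmic norm and the growth of $m(t)$: for any vector $v$ and matrix $Q$ one has $\tfrac{d}{dt}\big|_{0^+}\|v + hQv\| \le \lambda(Q)\,\|v\|$, and more precisely $\limsup_{h\to 0^+}\tfrac{\|v + hQv\| - \|v\|}{h} \le \lambda(Q)\|v\|$. Applying this with $v = e(t)$ and using $\dot e(t) \approx$ (above decomposition), I would obtain
\[
D^+ m(t) \;\le\; \lambda\Bigl(\int_0^1 Df(t,z(s))\,ds\Bigr)\,m(t) + \delta(t).
\]
Then, using subadditivity of the logarithmic norm together with the hypothesis $\lambda(Df(t,z(t))) \le l(t)$ for all $z(t) \in \mathrm{conv}\{x(t),y(t)\}$ — here I would note that $\lambda$ is convex and that averaging over $s\in[0,1]$ keeps the bound $\le l(t)$ — this yields the clean differential inequality $D^+ m(t) \le l(t)\,m(t) + \delta(t)$ with $m(t_0) \le \rho$.

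Finally I would integrate this linear differential inequality by the usual integrating-factor argument: set $n(t) = e^{-\int_{t_0}^t l(s)\,ds}\,m(t)$, so that $D^+ n(t) \le e^{-\int_{t_0}^t l(s)\,ds}\,\delta(t)$, integrate from $t_0$ to $t$ (valid since $m$ is absolutely continuous, being the norm of a difference of absolutely continuous functions, so $n$ is too and $n(t) - n(t_0) \le \int_{t_0}^t D^+ n(s)\,ds$), and multiply back by $e^{\int_{t_0}^t l(s)\,ds}$ to recover precisely the stated bound. The main obstacle is the middle step: justifying the passage from $\dot e(t)$ to the Dini-derivative inequality for $m(t)$ via the logarithmic norm, in particular handling the interchange of the $\limsup$ with the integral over $s$ and controlling the error terms when $e(t)$ is small or zero; I would dispatch the $m(t)=0$ case separately (where $D^+ m(t) \le \|\dot e(t)\| $ is bounded appropriately) and otherwise use that $t\mapsto \lambda(\int_0^1 Df(t,z(s))ds)$ is the relevant coefficient, bounded by $l(t)$ by convexity of $\lambda$.
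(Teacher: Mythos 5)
The paper does not prove this statement at all: it is quoted verbatim from the reference [HNW] (Hairer--N{\o}rsett--Wanner), so there is no in-paper proof to compare against. Your argument is the standard textbook proof of exactly this estimate, and it is correct: the decomposition $\dot e=(\dot y-f(t,y))+(f(t,y)-f(t,x))$, the mean value theorem in the form of Theorem~\ref{MVT}, the Dini-derivative bound $D^{+}\|e(t)\|\le\lambda\bigl(\int_0^1 Df(t,z(s))\,ds\bigr)\|e(t)\|+\delta(t)$ obtained from $\|v+hQv\|\le\|I+hQ\|\,\|v\|$, the sublinearity of $\lambda$ to pull the bound $l(t)$ through the $s$-average, and the integrating-factor Gr\"onwall step all go through; absolute continuity of $\|e(t)\|$ justifies integrating the Dini inequality, and the $e(t)=0$ case is trivial since there the two trajectories coincide and $D^{+}\|e\|\le\delta(t)$ directly. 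The one worry you raise, interchanging the $\limsup$ with the integral over $s$, is not actually needed: you apply the logarithmic norm to the single averaged matrix $\int_0^1 Df(t,z(s))\,ds$ and only afterwards use $\lambda\bigl(\int_0^1 Q(s)\,ds\bigr)\le\int_0^1\lambda(Q(s))\,ds$, which follows from the triangle inequality for $\|\cdot\|$ applied under the integral sign before taking the limit in $h$.
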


\smallskip

 In order to numerically compute the reachable set of a differential inclusion, we need a rigorous way of computing with sets and functions in Euclidean space.
A suitable calculus is given by the \emph{Taylor models} defined in~\cite{MB}:
\begin{definition}\label{tm}
Let $f: D \subset \mathbb{R}^v\rightarrow \mathbb{R}$ be a function
that is $(n+1)$ times continuously partially differentiable on an open set
containing the domain $D$. Let $x_0$ be a point in $D$ and $P$ the $n$-th 
order Taylor polynomial of $f$ around $x_0$. Let $I$ be an interval such
that 
Then $(p,I)p$ is called a Taylor model for $f$ if
\[
 f(x) - p(x-x_0) \in I \text{ for all } x\in D
\]
Then we call the pair $(P, I)$ an $n$-th order Taylor model of $f$ around $x_0$ on $D$.
\end{definition}

In Ariadne, we allow arbitrary polynomial approximations, and not just those defined by the Taylor series.
We take $p$ to be a polynomial on the unit domain $[-1,+1]^v$, and pre-compose $p$ by the inverse of
the affine scaling function $s:[-1,+1]^v\rightarrow D$ with $s_i(z_i)=r_i z_i+m_i$.
Instead of using an interval bound for the difference between $f$ and $p$, we take a positive error bound $e$.
We say $(s,p,e)$ is a \emph{scaled polynomial model} for $f$ on the box domain $D$ if $s:[-1,+1]^v\rightarrow D$ is an affine bijection and
\[ \sup_{x\in D} |f(x)-p(s^{-1}(x))| \leq e . \]
In the special case $D=[-1,+1]^v$, the unit box, we speak of a \emph{unit polynomial model} $(p,e)$ satsfying
\( \sup_{z\in [-1,+1]^v} |f(z)-p(z)| \leq e . \)
We use the notation $p\circ s^{-1}\pm e$ to denote the polynomial model $(s,p,e)$.

Polynomial models support a complete function calculus, including the usual arithmetical operations, algebraic and transcendental functions. 
Formally, if $\mathrm{op}$ is an operator on functions, then there is a corresponding operator $\widehat{\mathrm{op}}$ on polynomial models satisfying the property that if $\hat{f}_i$ are polynomial models for $f_i$, $i=1,\ldots,n$ on common domain $D$, then $\widehat{\mathrm{op}}(\hat{f}_1,\ldots,\hat{f}_n)$ is a polynomial model for $\mathrm{op}(f_1,\ldots,f_n)$ on $D$. 
A full description of polynomial models as used in Ariadne is given in~\cite{CNR}.

For the calculuations described in this paper, it is sufficient to consider sets of the form $S=f(D)$ for $D=[-1,+1]^m$ and $f:\mathbb{R}^m\rightarrow\mathbb{R}^n$.
If $p_i\pm e_i$ are unit polynomial models for $f_i$, then
\[\begin{aligned} 
S \subset \widehat{S} &= p([-1,+1]^m) \pm e \\ 
    &\qquad =\{ x \in \mathbb{R}^n \mid x_i = p_i(z) + d_i \text{ for some } z\in[-1,+1]^m \text{ and } d\in\mathbb{R}^n,\ |d_i|\leq e_i \} . 
\end{aligned}\]
Here, $p:[-1,+1]^m\rightarrow \mathbb{R}^n$ is the polynomial with components $p_i$, and $\pm e$ is the set $\prod_{i=1}^{m}[-e_i,+e_i]$.
The set $\widehat{S}$ is an \emph{over-approximation} to $S$.
Note that by defining polynomials $q_i(z,w)=p_i(z)+e_i w_i$, we have
\[ \widehat{S} = p([-1,+1]^m) \pm e \subset q([-1,+1]^{m+n}) \]
yielding an over-approximation as the polynomial image of the unit box without error terms.

\section{Approximation Scheme}\label{Di}

 We consider differential inclusions in the form of noisy differential equations

\be\label{cp}
  \dot x(t) = f(x(t), v(t)),\,\,\, v(t)\in V,
\ee

\noindent where $x:\mathbb{R}\rightarrow \mathbb{R}^n$, $v(\cdot)$ is a bounded measurable function,
$V\subset \mathbb{R}^m$ is a compact convex set, $f$ is continuous and $f(x,V)$ is convex for all $x\in\mathbb{R}^n$.
In order to compute an over-approximation to the reachable set of~\eqref{cp}, we compute
solution set of a different (an approximate) differential equation and
add the uniform error bound on the difference of the two solutions.

\subsection{Single-step approximation}\label{Step}

 Given an initial set of points $X_0$, define
\be\label{rch}
R(X_0,t)=\{x(t) \mid x(\cdot)\, \textrm{is a solution of~\eqref{cp} with}\; x(0)\in X_0\}
\ee

\noindent as the reachable set at time $t$.

 Let $[0,T]$ be an interval of existence of (\ref{cp}). Let $0=t_0,\, t_1,\, \ldots,\, t_{n-1}, t_n=T$
be a partition of $[0,T]$, and let $h_k=t_{k+1}-t_k$.
For $x\in\mathbb{R}^n$ and $v(\cdot)\in L^\infty([t_k,t_{k+1}];\mathbb{R}^m)$,
define $\phi(x_k,v(\cdot))$ to be the point $x(t_{k+1})$ which is the value at time $t_{k+1}$ of the solution of~\eqref{cp} with $x(t_k)=x_k$.

 At each time step we want to compute an over-approximation $R_{k+1}$ to the set
\begin{equation*}
 \reach(R_k,t_{k},t_{k+1})=\{ \phi(x_k,v(\cdot)) \mid x_k\in R_k\text{ and } v(\cdot)\in L^\infty([t_k,t_{k+1}];\mathbb{R}^m) \} .
\end{equation*}

 Since the space of bounded measurable functions is infinite-dimensional, we aim to approximate the set of all solutions by
restricting the disturbances to a finite-dimensional space.
Consider a set of approximating functions $W_k\subset C([t_k,t_{k+1}];\mathbb{R}^m)$ parameterized as $W_k=\{w(a_k,\cdot)\,|\,a_k\in A\subset\mathbb{R}^p\}$, such as $w(a_k,t) = a_{0k} + a_{1k} (t-t_{k+1/2}) / h_k$ where $t_{k+1/2} = t_k+h_k/2 = (t_k+t_{k+1})/2$.
We then need to find an error bound $\epsilon$ such that
\be\label{eps}
 \forall\,v_k\in L^\infty([t_k,t_{k+1}];V),\ \exists\,a_k\in A \text{ s.t. } \| \phi(x_k,v_k(\cdot)) - \phi(x_k,w(a_k,\cdot)) \| \leq \epsilon_k .
\ee
 Note that we do not need to find explicitly infinitely many  $a_k$'s. Instead we need to choose the correct dimension ($\mathbb{R}^p$) and provide bounds on them to get desired error $\epsilon_k$ . 
Setting $\tilde{\phi}(x_k,a_k)=\phi(x_k,w(a_k,\cdot))$, i.e., $\tilde{\phi}$ also denotes the solution
of $\dot x(t)=f(x_k,w(a_k,\cdot))$, with $x(t_k)=x_k$, at $t=t_{k+1}$, we obtain the over-approximation
\begin{equation*}
 R_{k+1} = \{ \tilde{\phi}(x_k,a_k) +  [-\epsilon_k, \epsilon_k]^n \mid x_k\in R_k \text{ and } a_k\in A\}.
\end{equation*}
\noindent Define the approximate system at time step $k$ by
\be\label{cpa}
  \dot y(t) = f(y(t), w_k(a_k,t)), \,\, y_k=y(t_k), \ t\in[t_k,t_{k+1}].
\ee
\noindent We would like to choose ``approximating'' functions $w_k=w(a_k,\cdot):[t_k,t_{k+1}]\rightarrow \mathbb{R}$,
depending on $x(t_k)$ and $v(\cdot)$,
such that the solution of (\ref{cpa}) is an approximation of high order to the solution of (\ref{cp}). 
The desired local error for this paper is at least of $O(h^3)$.
Then we can expect the global error (cumulative error for the time of computation, $[0,T]$) to be roughly of $O(h^2)$.

 Without loss of generality, we assume that $x(t_k)=y(t_k)$ for all
$k\ge 0$. To be precise, initially, we assume $x(t_0)=y(t_0)$.
After obtaining an over-approximation $R_1$, to the solution set at
time $t_1$, we use $R_1$ at the set of initial points of both the original
system~\eqref{cp} and its approximation~\eqref{cpa}
for the next time step. Thus we have $x(t_1)=y(t_1)\in R_1$. We compute $R_2$, and consider it to be
the set of initial points for both equations at time $t_2$.
Proceeding like this, we have $x(t_k)=y(t_k)$, for all $k\ge0$.

 The local error for a time-step consists of two parts. The first part is the analytical error
given by~\eqref{eps}. The second part is the numerical error which is an interval remainder of the 
polynomial model~(see Definition \ref{tm}) representing the solution $\tilde{\phi}(x_k,a_k)$ of $\dot{x}(t)=f(x(t),w_k(a_k,t))$.
We represent the time-$t_k$ reachable set $R_k=\{h_k(s)\,+\,   [-\varepsilon_k,\varepsilon_k]^n \, |\, s\in[-1,+1]^{p_k}\}$,
as a polynomial model whose remainder consists of both numerical and analytical error.
Here, $p_k$ is the number of parameters used in the description of $R_k$.
The inclusion $R(X_0,t_k)\subseteq R_k$ is guaranteed by this approximation scheme.

\medskip
Note that our method only
guarantees a local error of high order at the sequence of rational points $\{t_k\}$ which is {\it a priori} chosen. If one is trying to estimate the error at times $t_k<t<t_{k+1}$ for any $k$ along a \emph{particular} solution, a different formula should be used such as a logarithmic norm estimate based on Theorem~\ref{lnt}.

\subsection{Algorithm for Computing the Reachable Set}\label{Algo}

In this section we present an algorithm for computation of the solution set of~\eqref{di1},
using the single step computation presented earlier.

\begin{algorithm}\mbox{}
Let $R_k=\{ h_k(s)\pm e_k \mid s\in[-1,+1]^{p_k} \}$ be an over-approximation of the set $R(X_0,t_k)$.
To compute an over-approximation $R_{k+1}$ of $R(X_0,t_{k+1})$:
\begin{enumerate}
 \item\label{flwstp} Compute the flow $\tilde{\phi}_k(x_k,a_k)$ of
\[
 \dot x(t) = f(x(t),w_k(a_{k},t)),\,\,x(t_k)=x_k,
 \]
    for $t\in [t_k,t_{k+1}]$, $x_k \in R_k$, and $a_k\in A$.
 \item\label{errstp} Compute the uniform error bound $\varepsilon_k$ for the error of approximating $\dot{x}=f(x(t),v(t))$ by $\dot{x}=f(x(t),w(t))$.
 \item\label{aplystp} Compute the set $R_{k+1}$ which over-approximates $R(x_0,t_{k+1})$ as $R_{k+1} \supset \{\tilde{\phi}(x_k,a_k)\pm \epsilon_k\,|
 \, x_k\in R_k,\, a_k\in A\}$.
 \item\label{redstp} Reduce the number of parameters (if necessary).
 \item\label{spltstp} Split the new obtained domain (if necessary).
\end{enumerate}
\end{algorithm}

Step~\ref{flwstp} of the algorithm produces an approximated flow
in the form $\tilde{\phi}_k(x_k,a_k) \approx \phi(x_k,w(a_k,\cdot))$
which is guaranteed to be valid for all $x_k\in R_k$.
In practice, we cannot represent $\tilde{\phi}$ exactly, and
instead use polynomial model approximation with guaranteed error bound $\hat{\phi}$.
 In Step~\ref{errstp}, we add the uniform error bound $\varepsilon_k$ to make sure an
over-approximation is achieved. In Step~\ref{aplystp}, we compute a new approximating set by applying the
approximated flow to the initial set of points to obtain a solution set
$R_{k+1}=\{ \hat{\phi}(h(s_k)\pm e_k,a_k)\pm\varepsilon_k \}$.
Steps~\ref{redstp} and~\ref{spltstp} are crucial for the efficiency and the accuracy of the algorithm, as explained below.

 It is important to notice that the number of parameters ($a_k$ initially)
grows over the time steps. At each time-step, the number of parameters doubles, unless certain reduction
of parameters is applied. The easiest way to reduce the number of parameters is to replace
the parameter dependency by a uniform error,
but this can have a negative impact on the accuracy.
Another way to reduce number of parameters is using orthogonalization,
though this is only possible for affine approximations using currently known methods.

 It is also of importance to realize that if the approximating set becomes too large,
it may be hard to compute ``good'' approximations to the flow and/or the error.
In this case, we can split the set into smaller pieces, and evolve each piece separately.
This can improve the error, but is of exponential complexity in the state-space dimension.

\section{Input-Affine Systems}\label{IAS}

 In this section, we restrict attention to the input-affine system
\begin{equation}\label{ca}
\dot x(t) = f(x(t)) + \sum_{i=1}^{m} g_i(x(t)) v_i(t); \quad x(t_0)=x_0.
\end{equation}
\noindent  For some $r\ge 1$ which depends on the desired order, we assume that

\begin{itemize}
 \item $f:\mathbb{R}^n\rightarrow \mathbb{R}^n$ is $C^r$ function,
 \item each $g_i:\mathbb{R}^n\rightarrow \mathbb{R}^n$ is $C^r$ function,
 \item $v_i(\cdot)$ is a measurable function such that
  $v_i(t) \in [-V_i,+V_i]$ for some $V_i > 0$.
\end{itemize}

Then the equation~\eqref{cpa} becomes

\begin{equation}\label{caa}
\dot y(t) = f(y(t)) + \sum_{i=1}^{m} g_i(y(t))w_i(a_k,t); \quad y(t_k)=y_k, \  t\in[t_k,t_{k+1}].
\end{equation}

 In what follows, we assume that we have a bound $B$ on the solutions of (\ref{ca}) and (\ref{caa}) for all $t\in [0,T]$.
We take constants $V_i$, $K$, $K_i$, $L$, $L_i$, $H$, $\Lambda$ such that
\be\label{bounds}
\begin{gathered}
|v_i(\cdot)|\le V_i,\,\,\|f(z(t))\|\le K,\,\,\|g_i(z(t))\|\le K_{i}\,\, \lambda(Df(\cdot))\le \Lambda,\\[\jot]
\|Df(z(t))\| \le L,\,\,\|Dg_i(z(t))\|\le L_i,\,\, \|D^2f(z(t))\| \le H,\,\,\|D^2g_i(z(t))\| \le H_i, \\[\jot]
\end{gathered}
\ee
\noindent for each $i=1,...,m$, and for
all $t\in [0,T]$, and $z(\cdot)\in B$. We also set
\[  K'={\displaystyle\sum_{i=1}^{m}} V_i\,K_i, \ \  L'={\sum_{i=1}^{m}} V_i\,L_i \ \  H'={\sum_{i=1}^{m}} V_i\,H_i. \]
 Here, $Df$ denotes the Jacobian matrix, $D^2 f$ denotes the Hessian matrix,
and $\lambda(\cdot)$ denotes the logarithmic norm of a matrix defined in Section \ref{Prel}.

 We proceed to derive higher order estimates on the error by considering several different
cases. In each of the cases, $w_i(a,\cdot)$ is a real valued
finitely-parametrised function with $a\in A\subset \mathbb{R}^N$.
In general, the number of parameters $N$ depends on the number of inputs and the order of error desired.

  In what follows, we write $h_k=t_{k+1}-t_k$, $t_{k+1/2}=t_k + h_k/2 = (t_k+t_{k+1})/2$, and $\hat{q}(t) = \int_{t_k}^{t} q(s)\,ds$.

\subsection{Error derivation}\label{errde}

The single-step error in the difference between $x_{k+1}$ and $y_{k+1}$
is derived as follows. Writing~\eqref{ca} and ~\eqref{caa} as integral equations, we obtain:
\begin{subequations}\label{picard}
\begin{align}
x(t_{k+1}) &= x(t_k) + \int_{t_k}^{t_{k+1}} f(x(t)) + \sum_{i=1}^{m} g_i(x(t)) v_i(t)\,dt ; \\
y(t_{k+1}) &= y(t_k) + \int_{t_k}^{t_{k+1}} f(y(t)) + \sum_{i=1}^{m} g_i(y(t)) w_i(t)\,dt .
\end{align}
\end{subequations}
%
Since we can take $x(t_k) = y(t_k)$ as explained in Section~\ref{Di}, we obtain
\begin{subequations}\label{first}
\begin{align}
x(t_{k+1})-y(t_{k+1})
&=  \int_{t_k}^{t_{k+1}} f(x(t))-f(y(t))\,dt \label{firstF}\\
 &\qquad\qquad + \sum_{i=1}^{m} \int_{t_k}^{t_{k+1}}  g_i(x(t))v_i(t)- g_i(y(t))w_i(t) dt.\label{firstG}
\end{align}
\end{subequations}

\vspace{\baselineskip}

Integrating by parts the term~\eqref{firstF}, we obtain
\begin{align*}
 \eqref{firstF}\ & = \Bigl[ (t-t_{k+1/2}) \bigl(f(x(t)) - f(y(t))\bigr) \Bigr]_{t_k}^{t_{k+1}} \\
&\hspace{3em} - \int_{t_k}^{t_{k+1}} (t-t_{k+1/2})\frac{d}{dt}\bigl( f(x(t))- f(y(t))\bigr)dt \\[\jot]
 & = (h_k/2) \bigl(f(x(t_{k+1}))-f(y(t_{k+1})) \bigr) \\
&\hspace{3em} - \int_{t_k}^{t_{k+1}} (t-t_{k+1/2}) \bigl( Df(x(t))\dot x(t) - Df(y(t))\dot y(t)\bigr)dt.
\end{align*}

\noindent There two ways that we deal with term~\eqref{firstG}. First we rewrite the term inside the integral as
\begin{align*}
 g_i(x(t))v_i(t)- g_i(y(t))w_i(t) =  (g_i(x(t))-g_i(y(t)))\,w_i(t) + g_i(x(t))\,(v_i(t)-w_i(t)),
\end{align*}
\noindent and then integrate by parts the second term to obtain
\begin{subequations}\label{secondG1}
\begin{align}
\eqref{firstG} &= \sum_{i=1}^{m} \int_{t_k}^{t_{k+1}} (g_i(x(t))-g_i(y(t)))\,w_i(t)\,dt \notag \\
&+ \sum_{i=1}^{m} \Bigl[g_i(x(t))(\hat{v}_i(t)- \hat{w}_i(t))\Bigr]_{t_k}^{t_{k+1}}
-\sum_{i=1}^{m}\int_{t_k}^{t_{k+1}} \frac{d}{dt}\Bigl( g_i(x(t))\Bigr)\,(\hat{v}_i(t)-\hat{w}_i(t))\, dt\notag \\
&=\sum_{i=1}^{m} \int_{t_k}^{t_{k+1}} (g_i(x(t))-g_i(y(t)))\,w_i(t)\,dt \label{secondG1a} \\
&\qquad\qquad+ \sum_{i=1}^{m} g_i(x(t_{k+1}))(\hat{v}_i(t_{k+1})- \hat{w}_i(t_{k+1}))\label{secondG1b}\\
&\qquad\qquad-\sum_{i=1}^{m}\int_{t_k}^{t_{k+1}} Dg_i(x(t))\,\dot x(t)\,(\hat{v}_i(t)-\hat{w}_i(t))\, dt\label{secondG1c}
\end{align}
\end{subequations}
\noindent The second derivation is obtained just by integrating by parts,

\begin{subequations}\label{secondG}
\begin{align}
\eqref{firstG}\ &= \sum_{i=1}^{m} \Bigl[ g_i(x(t))\hat{v}_i(t) - g_i(y(t))\hat{w}_i(t) \Bigr]_{t_k}^{t_{k+1}} \notag \\
&\qquad\qquad - \sum_{i=1}^{m} \int_{t_k}^{t_{k+1}}\frac{d}{dt}\Bigl(g_i(x(t))\Bigr) \hat{v}_i(t) - \frac{d}{dt}\Bigl(g_i(y(t))\Bigr) \hat{w}_i(t)\,\, dt\notag\\
  &= \sum_{i=1}^{m} g_i(x(t_{k+1}))\hat{v}_i(t_{k+1}) - g_i(y(t_{k+1}))\hat{w}_i(t_{k+1})\label{secondGa} \\
&\qquad\qquad - \sum_{i=1}^{m} \int_{t_k}^{t_{k+1}} Dg_i(x(t)) \hat{v}_i(t) \dot{x}(t)- Dg_i(y(t))\hat{w}_i(t)\dot{y}(t) \,\, dt\label{secondGb}
\end{align}
\end{subequations}
\noindent Equations~\eqref{firstF} and~\eqref{secondG1} can be used to derive second-order local error estimates.
\vspace{\baselineskip}

By applying the mean value theorem (Theorem~\ref{MVT}) we obtain
\begin{equation*}
f(x(t_{k+1}))-f(y(t_{k+1}) = \int_{0}^{1} Df(z(s))ds\; \bigl(x(t_{k+1})-y(t_{k+1})\bigr)
\end{equation*}
\noindent Hence
\begin{subequations}\label{secondF}
\begin{align}
\eqref{firstF}\ &=  (h_k/2) \int_{0}^{1} Df(z(s))ds\; \bigl(x(t_{k+1})-y(t_{k+1})\bigr) \label{secondFa}\\
&\hspace{5em} - \int_{t_k}^{t_{k+1}} (t-t_{k+1/2}) \bigl( Df(x(t))\dot x(t) - Df(y(t))\dot y(t)\bigr)dt. \label{secondFb}
\end{align}
\end{subequations}

Separate the second part of the integrand in~\eqref{secondFb} as
\begin{subequations}
\begin{align}
  Df(x(t))\, \dot x(t) - Df(y(t))\,\dot y(t) &= Df(x(t))\,\bigl(\dot x(t) -\dot y(t)\bigr) \label{sepFa}\\[\jot]
    &\qquad\qquad +\bigl(Df(x(t)) - Df(y(t))\bigr)\,\dot y(t) \label{sepFb}
\end{align}
\end{subequations}
The first term of the right-hand-side can be expanded using
\begin{align*}
\dot x(t) -\dot y(t)
&= f(x(t)) - f(y(t)) +  \sum_{i=1}^{m} \bigl(g_i(x(t))-g_i(y(t))\bigr)w_i(t) \\
&\hspace{12em} + \sum_{i=1}^{m} g_i(x(t))\bigl((v_i(t)-w_i(t)\bigr).
\end{align*}

\noindent Hence we obtain
\begin{subequations}\label{thirdF}
\begin{align}
\eqref{firstF} & = (h_k/2) \int_{0}^{1} Df(z(s))ds\, (x(t_{k+1})-y(t_{k+1}))\label{thirdFa}\\
&\qquad - \int_{t_k}^{t_{k+1}} (t-t_{k+1/2}) \,\, Df(x(t))\,\,(f(x(t)) - f(y(t))) \, dt\label{thirdFb}\\
&\qquad - \sum_{i=1}^{m} \int_{t_k}^{t_{k+1}} (t-t_{k+1/2}) \,\, Df(x(t))\,\,(g_i(x(t))-g_i(y(t)))w_i(t) \,dt\label{thirdFc}\\
&\qquad - \sum_{i=1}^{m} \int_{t_k}^{t_{k+1}} (t-t_{k+1/2}) \,\, Df(x(t))\,\,g_i(x(t))\,\,(v_i(t)-w_i(t)) \, dt,\label{thirdFd} \\
&\qquad - \int_{t_k}^{t_{k+1}} (t-t_{k+1/2}) \,\, (Df(x(t)) - Df(y(t)))\,\,\dot y(t) dt\label{thirdFe}
\end{align}
\end{subequations}
where~\eqref{thirdFa} is~\eqref{secondFa},~(\ref{thirdF}b-d) come from~\eqref{sepFa}, and~\eqref{thirdFe} comes from~\eqref{sepFb}.
Note that for any $C^1$-function $h(x)$ we can write
\begin{equation*}
 |h(x(t)) - h(y(t)) | \leq
      \| Dh(z(t)) \| \cdot | x(t) - y(t) |
\end{equation*}
where $z(t)\in \overline{\mathrm{conv}}\{x(t),y(t)\}$. This will allow us to obtain third-order bounds for terms~(\ref{thirdF}b,c,e).
In order to obtain a third-order estimate for term~\eqref{thirdFd}, a further integration by parts is needed. We obtain:
\begin{subequations}\label{thirdFp}
\begin{align}\addtocounter{equation}{3}
 \eqref{thirdFd} &= - \sum_{i=1}^{m} \Bigl[ Df(x(t)) \, g_i(x(t)) \, {\mbox{\small$\displaystyle\int_{t_k}^{t}$}} (s-t_{k+1/2}) (v(s)-w(s)) ds \Bigr]_{t_k}^{t_{k+1}} \notag \\
&\qquad\begin{aligned} &\qquad + \int_{t_k}^{t_{k+1}} \bigl(D^2f(x(t))\,g_i(x(t))+ Df(x(t))Dg_i(x(t))\bigr)\,\dot x(t) \label{thirdFdp}\\
 &\hspace{14em}   \int_{t_k}^{t} (s-t_{k+1/2})(v_i(s)-w_i(s))ds\ dt .
\end{aligned}
\end{align}
\end{subequations}

\noindent Using similar type of derivation as for the derivation of~\eqref{thirdF}, again using the mean value theorem and integration by parts, we obtain
\begin{subequations}\label{thirdG}
\begin{align}
 \eqref{secondGa}+\eqref{secondGb} &= \sum_{i=1}^{m} \int_{0}^{1} Dg_i(z(s))ds\;\bigl(x(t_{k+1})-y(t_{k+1})\bigr)\hat{w}_i(t_{k+1})\label{thirdGa}\\
&\qquad + \sum_{i=1}^{m} g_i(x_{k+1})\bigl(\hat{v}_i(t_{k+1}) - \hat{w}_i(t_{k+1})\bigr)\label{thirdGb}\\
&\qquad - \sum_{i=1}^{m} \int_{t_k}^{t_{k+1}}  \bigl(Dg_i(x(t)) - Dg_i(y(t))\bigr)\,\dot y(t)\, \hat{w}_i(t) dt\label{thirdGc}\\
&\qquad - \sum_{i=1}^{m} \int_{t_k}^{t_{k+1}}  Dg_i(x(t))\,\bigl(f(x(t)) - f(y(t))\bigr)\, \hat{w}_i(t)\, dt\label{thirdGd}\\
&\qquad - \sum_{i=1}^{m} \int_{t_k}^{t_{k+1}}  Dg_i(x(t))\,f(x(t))\,\bigl(\hat{v}_i(t)-\hat{w}_i(t)\bigr)\label{thirdGe}\\
&\qquad - \sum_{i=1}^{m}\sum_{j=1}^{m} \int_{t_k}^{t_{k+1}}  Dg_i(x(t))\,\bigl(g_j(x(t))-g_j(y(t))\bigr)\,w_j(t)\,  \hat{w}_i(t)\,dt\label{thirdGf}\\
&\qquad - \sum_{i=1}^{m}\sum_{j=1}^{m} \int_{t_k}^{t_{k+1}}  Dg_i(x(t))\,g_j(x(t))\,\bigl(v_j(t)\hat{v}_i(t)-w_j(t)\hat{w}_i(t)\bigr) \, dt. \label{thirdGg}
\end{align}
\end{subequations}
The term~\eqref{thirdGe} can be further integrated by parts to obtain
\begin{subequations}\label{thirdGp}
\begin{align}\addtocounter{equation}{4}
\eqref{thirdGe}&  = - \sum_{i=1}^{m} \Bigl[ Dg_i(x(t)) \, f(x(t)) \, {\mbox{\small$\displaystyle\int_{t_k}^{t}$}} (\hat{v}(s)-\hat{w}(s))ds \Bigr]_{t_k}^{t_{k+1}} \notag\\
 & \qquad + \sum_{i=1}^{m} \int_{t_k}^{t_{k+1}}  \bigl( D^2g_i(x(t))\,f(x(t)) + Dg_i(x(t))\,Df(x(t)) \bigr) \dot{x}(t) \,(\hat{\hat{v}}_i(t)-\hat{\hat{w}}_i(t))\, dt \label{thirdGep}
\end{align}
and the term~\eqref{thirdGg} to obtain
\begin{align}\addtocounter{equation}{1}
\eqref{thirdGg}&= - \sum_{i=1}^{m}\sum_{j=1}^{m} \Bigl[  Dg_i(x(t))\,g_j(x(t))\,{\mbox{\small$\displaystyle\int_{t_k}^{t}$}} \bigl(v_j(s)\hat{v}_i(s)-w_j(s)\hat{w}_i(s)\bigr) ds \Bigr] \notag\\
&\qquad\begin{aligned} &\qquad + \sum_{i=1}^{m}\sum_{j=1}^{m} \int_{t_k}^{t_{k+1}}  \bigl( D^2g_i(x(t))\,g_j(x(t))+Dg_i(x(t))\,Dg_j(x(t)) \bigr)\,\dot{x}(t) \\[-\jot]
 &\hspace{16em} \,{\mbox{\small$\displaystyle\int_{t_k}^{t}$}} \bigl(v_j(s)\hat{v}_i(s)-w_j(s)\hat{w}_i(s)\bigr)ds \ dt.
\end{aligned}
\label{thirdGgp}
\end{align}
\end{subequations}
\noindent Equations~(\ref{thirdF}-\ref{thirdGp}) can be used to derive third-order local error estimates.

\subsection{Local error estimates}\label{formulas}

  We proceed to give formulas for the local error having different assumptions on functions $f(\cdot)$, $g_i(\cdot)$
and $w_i(\cdot)$.
We present necessary and sufficient conditions
for obtaining local errors of $O(h)$, $O(h^2)$, $O(h^3)$, and give a methodology
to obtaining even higher-order errors. In addition, we give formulas for the error calculation in several cases.

\subsubsection{Local error of $O(h)$}

\begin{theorem}\label{case1}
For any $k\ge 0$, and all $i=1,...,m$, if
\begin{itemize}
 \item $f(\cdot)$ is a Lipschitz continuous vector function,
 \item $g_i(\cdot)$ are continuous vector functions, and
 \item $w_i(t)=0$ on $[t_k,t_{k+1}]$,
\end{itemize}
\noindent then the local error is of $O(h)$. Moreover, a formula
for the error bound is:
\begin{equation}\label{orderh}
 \bigl| x(t_{k+1}) - y(t_{k+1})\bigr| \le  h_k\,K'\, \frac{e^{\Lambda h_k} -1}{\Lambda\,h_k}.
\end{equation}
Alternatively, we can use
\begin{equation}\label{orderhalt}
 \bigl| x(t_{k+1}) - y(t_{k+1})\bigr| \le  h_k\,\biggl(2K + K'\biggr) .
\end{equation}
\end{theorem}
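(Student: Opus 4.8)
The plan is to start from the integral identity~\eqref{first}, specialised to the present case $w_i\equiv 0$, which reduces to
\[
x(t_{k+1})-y(t_{k+1}) = \int_{t_k}^{t_{k+1}} \bigl(f(x(t))-f(y(t))\bigr)\,dt + \sum_{i=1}^{m}\int_{t_k}^{t_{k+1}} g_i(x(t))\,v_i(t)\,dt ,
\]
and then to estimate the two pieces. The crude bound~\eqref{orderhalt} follows immediately from the triangle inequality together with the a priori bounds~\eqref{bounds}: on the bounding region $B$ one has $\|f\|\le K$, hence $\|f(x(t))-f(y(t))\|\le 2K$, while $\|g_i(x(t))v_i(t)\|\le K_iV_i$, so summing over $i$ gives $\|\sum_i g_i(x(t))v_i(t)\|\le K'$; integrating over an interval of length $h_k$ yields $h_k(2K+K')$.

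For the sharper estimate~\eqref{orderh} I would instead regard $x(\cdot)$ as a perturbed solution of the approximate equation. Since $w_i\equiv 0$, the approximate system~\eqref{caa} is simply $\dot y = f(y)$, whose right-hand side is Lipschitz by hypothesis, and the true solution satisfies $\dot x(t)-f(x(t)) = \sum_{i=1}^{m} g_i(x(t))v_i(t)$, a defect of size at most $K'$. I then apply the logarithmic-norm estimate, Theorem~\ref{lnt}, taking the reference vector field to be $f$, the defect bound $\delta(t)\equiv K'$, the rate $l(t)\equiv\Lambda$ (legitimate because $\lambda(Df(z))\le\Lambda$ for every $z$ in $B$, in particular for $z(t)\in\mathrm{conv}\{x(t),y(t)\}$), and $\rho=0$ since $x(t_k)=y(t_k)$ as arranged in Section~\ref{Di}. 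The conclusion of Theorem~\ref{lnt} then gives
\[
\|x(t_{k+1})-y(t_{k+1})\| \le e^{\Lambda h_k}\int_{t_k}^{t_{k+1}} e^{-\Lambda(s-t_k)}K'\,ds = K'\,\frac{e^{\Lambda h_k}-1}{\Lambda} = h_k\,K'\,\frac{e^{\Lambda h_k}-1}{\Lambda h_k} ,
\]
which is~\eqref{orderh} (with the middle expression read as its limit $h_kK'$ when $\Lambda=0$).

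Finally, both formulas are manifestly $O(h_k)$: \eqref{orderhalt} is linear in $h_k$ outright, and in~\eqref{orderh} the factor $(e^{\Lambda h_k}-1)/(\Lambda h_k)\to 1$ as $h_k\to 0^+$, hence stays bounded over any bounded range of step sizes. I do not expect a real obstacle here; the only point demanding care is the bookkeeping in invoking Theorem~\ref{lnt} — namely identifying $x$ (not $y$) as the solution carrying the defect $\delta$, and verifying that the pointwise hypothesis $\lambda(Df(z(t)))\le l(t)$ along $\mathrm{conv}\{x(t),y(t)\}$ is indeed furnished by the global bound $\lambda(Df(\cdot))\le\Lambda$ on $B$.
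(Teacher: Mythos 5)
Your proof is correct and follows essentially the same route as the paper: the crude bound~\eqref{orderhalt} comes from the integral identity and the a priori bounds, and~\eqref{orderh} from Theorem~\ref{lnt} with defect $\delta\equiv K'$ and rate $l\equiv\Lambda$. The only (harmless) difference is the assignment of roles when invoking Theorem~\ref{lnt} --- the paper treats $y$ as the defective solution of the full perturbed system, whereas you treat $x$ as the defective solution of the unperturbed equation $\dot z=f(z)$; your choice is in fact the one for which the hypothesis $\lambda(Df(z))\le\Lambda$ is verified verbatim.
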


\begin{proof} Since $w_i(t)=0$, we have $\dot{y}(t)=f(y(t))$. Using the bounds given in~\eqref{bounds}, we can take $l(t)=\Lambda$ in Theorem~\ref{lnt}. Further,
 and
\begin{align*}
 \biggl\|\dot y(t) - \Bigl(f(y(t)) + \sum_{i=1}^m g_i(y(t))v_i(t)\Bigr)\biggr\| = \biggl\|\sum_{i=1}^m g_i(y(t))v_i(t))\biggr\| \le \sum_{i=1}^{m} K_i\,V_i=K'
\end{align*}
\noindent so we can take $\delta(t)=\sum_{i=1}^{m} K_i\,V_i$. Hence the formula~\eqref{orderh} is obtained directly from Theorem~\ref{lnt}.
Note that $(e^{\Lambda h_k} - 1)/(\Lambda\,h_k) = 1 + \Lambda h_k/2 + \cdots$ is $O(1)$,
so the local error is of $O(h)$. Equation~\eqref{orderhalt} can be obtained by noting that $\sup_{t\in [t_k,t_{k+1}]} ||f(x(t))-f(y(t))||\le 2K$.
\end{proof}

\subsubsection{Local error of $O(h^2)$}

\begin{theorem}\label{case2}
For any $k\ge 0$, and all $i=1,...,m$, if
\begin{itemize}
 \item $f(\cdot)$, $g_i(\cdot)$ are $C^1$ vector functions, and
 \item $w_i(\cdot)$ are bounded measurable functions defined on $[t_k,t_{k+1}]$ which satisfy
  \be\label{se1}
   \int_{t_k}^{t_{k+1}} v_i(t) - w_i(t) \, dt = 0,
  \ee
\end{itemize}
\noindent then an error of $O(h^2)$ is obtained.
\end{theorem}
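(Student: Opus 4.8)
\subsection*{Proof proposal}

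The plan is to reuse the two exact identities already derived in Section~\ref{errde}: the expansion of term~\eqref{firstF} given by~\eqref{secondF} (one integration by parts followed by the mean value theorem), and the expansion of term~\eqref{firstG} given by~\eqref{secondG1}. Under the moment condition~\eqref{se1} every summand on the right-hand side is $O(h_k^2)$, except for one term which is $O(h_k)$ times $\|x(t_{k+1})-y(t_{k+1})\|$ and can therefore be absorbed into the left-hand side.

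First I would record a crude a priori bound. Since $f$ and the $g_i$ are $C^1$, hence bounded on the set $B$ by~\eqref{bounds}, and the $v_i$, $w_i$ are bounded, both $\dot x$ and $\dot y$ are bounded on $[t_k,t_{k+1}]$ by a constant $C_0$ depending only on the constants in~\eqref{bounds} and on $\sup_i\|w_i\|_\infty$. Integrating $\dot x-\dot y$ from $t_k$, where $x(t_k)=y(t_k)$, gives $\|x(t)-y(t)\|\le C_0 h_k$ for all $t\in[t_k,t_{k+1}]$ (this is just the $O(h)$ estimate of Theorem~\ref{case1}).

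Next, split $x(t_{k+1})-y(t_{k+1})=\eqref{firstF}+\eqref{firstG}$. For~\eqref{firstF}, identity~\eqref{secondF} writes it as $\eqref{secondFa}+\eqref{secondFb}$: the term~\eqref{secondFa} is bounded by $(L h_k/2)\,\|x(t_{k+1})-y(t_{k+1})\|$ using $\|\int_0^1 Df(z(s))\,ds\|\le L$, and in~\eqref{secondFb} the integrand $Df(x)\dot x-Df(y)\dot y$ is bounded while $|t-t_{k+1/2}|\le h_k/2$, so~\eqref{secondFb} is $O(h_k^2)$. For~\eqref{firstG} I would use the form~\eqref{secondG1}, which splits it as $\eqref{secondG1a}+\eqref{secondG1b}+\eqref{secondG1c}$. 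The boundary term~\eqref{secondG1b} equals $\sum_i g_i(x(t_{k+1}))\bigl(\hat v_i(t_{k+1})-\hat w_i(t_{k+1})\bigr)$, and $\hat v_i(t_{k+1})-\hat w_i(t_{k+1})=\int_{t_k}^{t_{k+1}}(v_i-w_i)\,dt=0$ by hypothesis~\eqref{se1}, so this term vanishes. In~\eqref{secondG1a} one has $\|g_i(x(t))-g_i(y(t))\|\le L_i\|x(t)-y(t)\|\le L_i C_0 h_k$ by the crude bound and $|w_i|$ bounded, so the integral over an interval of length $h_k$ is $O(h_k^2)$; in~\eqref{secondG1c}, $\|Dg_i(x(t))\dot x(t)\|$ is bounded and $|\hat v_i(t)-\hat w_i(t)|\le\int_{t_k}^t|v_i-w_i|\,ds\le C_1 h_k$, so again $O(h_k^2)$. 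Combining, $\|x(t_{k+1})-y(t_{k+1})\|\le (Lh_k/2)\|x(t_{k+1})-y(t_{k+1})\|+C h_k^2$, and for $h_k<2/L$ this gives $\|x(t_{k+1})-y(t_{k+1})\|\le C h_k^2/(1-Lh_k/2)=O(h_k^2)$; alternatively one bounds~\eqref{secondFa} by $(Lh_k/2)C_0 h_k$ directly via the crude estimate, or packages the whole argument as an application of Theorem~\ref{lnt} with $\delta(t)$ read off from these bounds. The only real point — and the reason the order improves from $O(h)$ to $O(h^2)$ — is the observation that the single genuinely first-order contribution is the boundary term~\eqref{secondG1b}, which is exactly what the first-moment condition~\eqref{se1} annihilates; the rest is routine estimation against~\eqref{bounds} and the crude bound.
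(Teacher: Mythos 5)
Your proof is correct and follows essentially the same route as the paper: decompose the difference via~\eqref{first} and~\eqref{secondG1}, observe that the boundary term~\eqref{secondG1b} is annihilated by the moment condition~\eqref{se1}, and bound the remaining terms at $O(h_k^2)$ using an a priori $O(h_k)$ estimate on $\|x(t)-y(t)\|$. The only differences are cosmetic: the paper bounds~\eqref{firstF} directly by $h_k\,L\,\sup_{t}\|x(t)-y(t)\| = O(h_k^2)$ without invoking the expansion~\eqref{secondF} and the absorption argument (so no smallness condition on $h_k$ is needed), and it obtains the a priori $O(h_k)$ bound from Theorem~\ref{lnt} rather than by direct integration of $\dot x-\dot y$.
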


\begin{proof} To show that the error is of $O(h^2)$, we use equations~(\ref{first},\ref{secondG1}).
The equation~\eqref{firstF} is in the
desired form, i.e., of $O(h^2)$, since we can write
\[
\biggl|\int_{t_k}^{t_{k+1}} f(x(t)) - f(y(t))\,dt\biggr|
\le h \, L\; {\textstyle\sup_{t\in [t_k,t_{k+1}]}}\|x(t)-y(t)\|,
\]
\noindent and $\sup_{t\in (t_k,t_{k+1})}\|x(t)-y(t)\|$ is of $O(h)$
by Theorem \ref{lnt}. Similarly, equations~\eqref{secondG1a} and~\eqref{secondG1c} are of $O(h^2)$.
Note that the equation~\eqref{secondG1b} is zero due to
(\ref{se1}). The theorem is proved.

\end{proof}

 In order to be able to compute the errors, we need the bounds on the functions $w_i(\cdot)$. In particular,
we can restrict $w_i(\cdot)$ to belong to certain class of functions, such as polynomial or step functions.

\medskip

\begin{theorem}\label{case2a}
For any $k\ge 0$, and all $i=1,...,m$, if
\begin{itemize}
 \item $f(\cdot)$, $g_i(\cdot)$ are $C^1$ vector functions, and
 \item $w_i(t)$ are real valued, constant functions defined on $[t_k,t_{k+1}]$ by
 $ w_i=\frac{1}{h_k} \int_{t_k}^{t_{k+1}} v_i(t)dt , $
\end{itemize}
\noindent then a formula for calculation of the local error is given by
\begin{align}\label{constantapproximationsecondordererror}
 \|x(t_{k+1}) - y(t_{k+1})\| \leq h_k^2\,\left(\left(K+K'\right)L'/3+2\,K'\,\left(L + L'\right)\,\frac{e^{\Lambda\, h_k}-1}{\Lambda\,h_k}\right).
\end{align}
\end{theorem}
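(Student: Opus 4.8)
The plan is to specialize the general third-order decomposition from Section~\ref{errde} to the case of a constant approximation $w_i(t) = w_i = \frac{1}{h_k}\int_{t_k}^{t_{k+1}} v_i(t)\,dt$, and to bound each resulting term using the a priori bounds~\eqref{bounds} together with the logarithmic-norm estimate of Theorem~\ref{lnt}. The first observation is that the choice of $w_i$ guarantees $\int_{t_k}^{t_{k+1}} (v_i(t) - w_i(t))\,dt = 0$, so Theorem~\ref{case2} already applies and the error is $O(h^2)$; what remains is to extract the explicit constant. Since $w_i$ is constant, $\hat w_i(t) = w_i (t - t_k)$ is an explicit linear function, and the primitives $\hat v_i(t) - \hat w_i(t)$ vanish at both endpoints $t_k$ and $t_{k+1}$, which is exactly what kills the boundary terms in~\eqref{secondG1b} (equivalently~\eqref{secondG1}).

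Concretely, I would work from equations~\eqref{first} and~\eqref{secondG1}: the error $x(t_{k+1}) - y(t_{k+1})$ equals~\eqref{firstF} plus~\eqref{secondG1a} plus~\eqref{secondG1b} plus~\eqref{secondG1c}, where~\eqref{secondG1b} is zero. For~\eqref{firstF} $= \int_{t_k}^{t_{k+1}} f(x(t)) - f(y(t))\,dt$, I bound $|f(x(t)) - f(y(t))| \le L\,\|x(t)-y(t)\|$ by the mean value theorem and then use Theorem~\ref{lnt} to bound $\|x(t)-y(t)\|$; with $l(t) = \Lambda$ and $\delta(t) = K'$ this gives $\|x(t)-y(t)\| \le K' (t-t_k) e^{\Lambda(t-t_k)} \big/ \ldots$, more precisely $\le h_k K' \frac{e^{\Lambda h_k}-1}{\Lambda h_k}$ as in~\eqref{orderh}, so~\eqref{firstF} contributes a term of order $h_k^2$ with the factor $2 K'(L+L') \frac{e^{\Lambda h_k}-1}{\Lambda h_k}$ type structure after accounting for the $g_i$-contributions to $\delta$. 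For~\eqref{secondG1a} $= \sum_i \int_{t_k}^{t_{k+1}} (g_i(x(t)) - g_i(y(t))) w_i(t)\,dt$, I again use $|g_i(x)-g_i(y)| \le L_i \|x-y\|$ and $|w_i| \le V_i$ (since $w_i$ is an average of $v_i \in [-V_i,V_i]$), yielding another $O(h_k^2)$ term with an $L'$ factor. The term~\eqref{secondG1c} $= -\sum_i \int_{t_k}^{t_{k+1}} Dg_i(x(t))\dot x(t)(\hat v_i(t) - \hat w_i(t))\,dt$ needs the pointwise bound $|\hat v_i(t) - \hat w_i(t)| \le$ something like $h_k V_i$ (or better, $\tfrac{h_k}{2} V_i$ by an elementary estimate on a primitive that vanishes at both endpoints), $\|\dot x(t)\| \le K + K'$, and $\|Dg_i\| \le L_i$; integrating over an interval of length $h_k$ gives the $h_k^2 (K+K')L'/3$-type term — the constant $1/3$ coming from a sharp estimate of $\int_{t_k}^{t_{k+1}} |\hat v_i(t) - \hat w_i(t)|\,dt$ when the worst case for the primitive is understood (the integral of $|t - t_k|\cdot$ bounds, or a more careful bookkeeping of the antiderivative).

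The main obstacle I expect is the bookkeeping needed to get the precise numerical constants $1/3$ and $2$ and to correctly allocate the $g_i$-terms between the ``Lipschitz-type'' bound on $\sup\|x(t)-y(t)\|$ (via Theorem~\ref{lnt}, which needs $\delta(t)$ and $l(t)$) and the direct integral bounds. In particular one must decide whether $\dot x - f(x)$ is bounded by $K'$ (using $|v_i| \le V_i$) when setting up $\delta(t)$ in Theorem~\ref{lnt}, and then be careful not to double-count the same contribution. Getting the sharp constant $\tfrac{1}{3}$ requires estimating $\int_{t_k}^{t_{k+1}} |\hat v_i(t) - \hat w_i(t)|\,dt$; since $\hat v_i(t) - \hat w_i(t) = \int_{t_k}^t (v_i(s) - w_i)\,ds$ vanishes at both ends and $|v_i - w_i| \le 2V_i$ pointwise is too crude, one instead notes $|\hat v_i(t) - \hat w_i(t)| \le \min(t-t_k, t_{k+1}-t)\cdot(\text{something})$ or uses the identity that the worst case is a step function, leading to a parabola-like profile whose integral scales as $h_k^2/\text{const}$. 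Once these constants are pinned down, assembling the three surviving terms~\eqref{firstF}, \eqref{secondG1a}, \eqref{secondG1c} and using $K' = \sum V_i K_i$, $L' = \sum V_i L_i$ gives exactly~\eqref{constantapproximationsecondordererror}, and the $h_k^2$ order is then manifest since $\frac{e^{\Lambda h_k}-1}{\Lambda h_k} = 1 + O(h_k)$.
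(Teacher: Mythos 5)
Your decomposition is the same as the paper's (work from \eqref{first} and \eqref{secondG1}, note that \eqref{secondG1b} vanishes, and bound \eqref{firstF}, \eqref{secondG1a}, \eqref{secondG1c} separately), but the two places you flag as ``bookkeeping obstacles'' are exactly where the proof's content lives, and as sketched both steps would come out wrong. First, your application of Theorem~\ref{lnt} directly to the pair $(x,y)$ with $l(t)=\Lambda$ and $\delta(t)=K'$ is not valid: if $x$ is taken as the exact solution of $\dot x=f(x)+\sum_i g_i(x)v_i$, then the reference vector field is $f+\sum_i g_iv_i$, whose Jacobian is $Df+\sum_i v_i\,Dg_i$ with logarithmic norm bounded only by $\Lambda+L'$, not $\Lambda$; moreover the defect of $y$ relative to that field is $\|\sum_i g_i(y)(w_i-v_i)\|\le 2K'$, not $K'$. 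The paper avoids both problems by introducing an auxiliary solution $z$ of the pure drift $\dot z=f(z)$ and applying Theorem~\ref{lnt} twice --- to $(z,x)$ and to $(z,y)$, each with defect $\le K'$ and logarithmic norm $\Lambda$ --- then using the triangle inequality to obtain $\|x(t)-y(t)\|\le 2h_kK'\,\frac{e^{\Lambda h_k}-1}{\Lambda h_k}$. This intermediate-solution device is the missing idea; without it you cannot keep the exponent at $\Lambda$ and land on the factor $2K'(L+L')$.

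Second, your proposed pointwise bound $|\hat v_i(t)-\hat w_i(t)|\le 2V_i\min(t-t_k,\,t_{k+1}-t)$ integrates to $V_ih_k^2/2$, which would give a constant $1/2$ in place of $1/3$ in front of $(K+K')L'$. The sharp bound is the parabola, not the triangle: writing $\hat v_i(t)-\hat w_i(t)=\frac{t_{k+1}-t}{h_k}\int_{t_k}^{t}v_i-\frac{t-t_k}{h_k}\int_{t}^{t_{k+1}}v_i$ (this is the content of the paper's $a_i(t)$, $b_i(t)$ decomposition) gives $|\hat v_i(t)-\hat w_i(t)|\le 2V_i(t-t_k)(t_{k+1}-t)/h_k$, whence the maximum $V_ih_k/2$ and the integral $V_ih_k^2/3$. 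With these two corrections your assembly of the three surviving terms does reproduce \eqref{constantapproximationsecondordererror}, so the plan is salvageable, but as written neither constant in the stated formula would be recovered.
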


\begin{proof}
To derive~\eqref{constantapproximationsecondordererror}, we obtain $\|x(t_{k+1}) - y(t_{k+1})\|$ from equations~\eqref{firstF} and~\eqref{secondG1}.
Using the bounds given in~\eqref{bounds},
it is immediate that $||\dot{x}||\leq K + \sum_{i+1}^{m} V_i\,K_i$,
and straightforward to show that $|w_i(t)|\leq V_i$ and  $|\hat{v}_i(t)-\hat{w}_i(t)| \leq 2V_i\,h_k$ for $t\in[t_k,t_{k+1}]$.
However, we can get a slighly better bound $|\hat{v}_i(t)-\hat{w}_i(t)|\le V_i\, h_k/2$ by considering the following:
Without loss of generality, assume $t\in[0,h]$, and let
\begin{align*}
 a_i(t)=\frac{1}{t}\,\int_{0}^{t} v_i(s)\,ds,\ \ \
 b_i(t)=\frac{1}{h-t}\,\int_{t}^{h-t} v_i(s)\,ds
\end{align*}
\noindent and define \[w_i(t)=(t\,a_i(t)\, +\,(h-t)\,b_i(t))/h.\]
Then, $w_i=w_i(t)$ is constant for all $t\in [0,h]$.
Notice that $\hat{v}_i(t)=ta(t)$ and $\hat{w}_i(t)=(t/h)(ta(t)+(h-t)b(t))$.
Hence, we have
\begin{align*}
 \hat{v}_i(t)-\hat{w}_i(t) &= t(h-t)(a(t)-b(t))/h,\\
 |\hat{v}_i(t)-\hat{w}_i(t)| &= t(h-t)|a(t)-b(t)|/h \le V_i\,h/2.
\end{align*}
Additionally, we can prove that $\int_{t_k}^{t_{k+1}}|\hat{v}_i(t)-\hat{w}_i(t)|\,dt \leq V_i\,h_k^2 /3$.
Take $z(t)$ to satisfy the differential equation $\dot{z}(t)=f(z(t))$.
From Theorem \ref{lnt}, we have
\[
 \|x(t)-z(t)\|,\|y(t)-z(t)\| \le h_k\,\Bigl(\sum_{i=1}^{m} K_i\,V_i\Bigr)\,\frac{e^{\Lambda h_k} -1}{\Lambda\,h_k}
\]
and hence
\[
 \|x(t)-y(t)\| \le 2\,h_k\,\Bigl(\sum_{i=1}^{m} K_i\,V_i\Bigr)\,\frac{e^{\Lambda h_k} -1}{\Lambda\,h_k}
\]
\noindent for $t\in [t_k,t_{k+1}]$. Taking the norm of the equations (\ref{firstF},\ref{secondG1a},\ref{secondG1c})
we obtain
\begin{multline*}
 \|x(t_{k+1})-y(t_{k+1})\| \leq
    \int_{t_k}^{t_{k+1}} \Bigl( L+\sum_{i=1}^{m} V_iL_i \Bigr)\,\biggl( 2\,h_k\,\Bigl(\sum_{i=1}^{m} K_i\,V_i\Bigr)\,\frac{e^{\Lambda h_k} -1}{\Lambda\,h_k}\biggr) \\
    \qquad\qquad+ \sum_{i=1}^{m} L_i \Bigl( K+\sum_{j=1}^{m} V_jK_j\Bigr) \bigl| \hat{v}_i(t)-\hat{w}_i(t) \bigr|\,dt \\
  \leq {h_k}^2 \biggl( \Bigl( L+\sum_{i=1}^{m} V_iL_i \Bigr)\,\biggl( 2\,\Bigl(\sum_{i=1}^{m} K_i\,V_i\Bigr)\,
              \frac{e^{\Lambda h_k} -1}{\Lambda\,h_k}\biggr)
    + \frac{1}{3}\Bigl(\sum_{i=1}^{m} V_iL_i\Bigr) \Bigl( K+\sum_{j=1}^{m} V_jK_j\Bigr) \biggr) .
\end{multline*}
Using $K'$ and $L'$, we get the desired formula (\ref{constantapproximationsecondordererror}).
\end{proof}

\begin{remark}
Note that as $\Lambda\rightarrow 0$, then $\frac{e^{\Lambda\, h}-1}{\Lambda\,h}\rightarrow 1$.
This is also consistent with Theorem \ref{lnt}. In fact, if $\Lambda=0$, we get
\[
 \|x(t)-y(t)\| \le 2\,h_k\,\Bigl(\sum_{i=1}^{m} K_i\,V_i\Bigr)
\]
\noindent and therefore,
\begin{align}
 \|x(t_{k+1}) - y(t_{k+1})\| \leq h_k^2\,\bigl(\left(K+K'\right)L'/3+2\,K'\,\left(L + L'\right)\bigr),
\end{align}
\noindent which is still of $O(h^2)$. Further, we will not give explicit formulas for the error when $\Lambda=0$.
\end{remark}

\begin{theorem}\label{case2a1}
If all assumptions of Theorem \ref{case2a} are satisfied, and in addition $f(\cdot)$ is $C^2$,
then a formula for calculation of the local error can be given by
\begin{align*}
& \biggl(1-(h_kL/2)\biggr)\|x(t_{k+1}) - y(t_{k+1})\| \le (h_k^2/3)\, \left(3\,K'\,L'\,\,\frac{e^{\Lambda h_k} -1}{\Lambda h_k} + L'\,(K+K')\right)\\
&\qquad\qquad + (h_k^3/4)\, K'\, \biggl(L\,L'+ L^2 + H(K+K')\biggr)\, \frac{e^{\Lambda h_k} -1}{\Lambda h_k}\\
&\qquad\qquad + (11\,h_k^3/24)\,(H\,K'+L\,L')(K+K').
\end{align*}
\end{theorem}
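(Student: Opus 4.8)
The plan is to combine the third-order expansion of the $f$-part with the second-order bound already available for the $g$-part, being careful to isolate a term proportional to $\|x(t_{k+1})-y(t_{k+1})\|$ on the left so as to produce the factor $(1-h_kL/2)$. First I would start from the decomposition $x(t_{k+1})-y(t_{k+1}) = \eqref{firstF} + \eqref{firstG}$, substitute the third-order expansion~\eqref{thirdF} for~\eqref{firstF} (valid since $f$ is now $C^2$), and use~\eqref{secondG1} for~\eqref{firstG} since under the hypotheses of Theorem~\ref{case2a} the functions $w_i$ are the constant averages, so~\eqref{secondG1b} vanishes by~\eqref{se1}. The key preliminary estimates I would invoke are the ones established in the proof of Theorem~\ref{case2a}: $\|\dot x\|,\|\dot y\|\le K+K'$, $|w_i|\le V_i$, $|\hat v_i-\hat w_i|\le V_ih_k/2$, $\int_{t_k}^{t_{k+1}}|\hat v_i-\hat w_i|\,dt\le V_ih_k^2/3$, and the logarithmic-norm bound $\|x(t)-y(t)\|\le 2h_k K'\,(e^{\Lambda h_k}-1)/(\Lambda h_k)$ for $t\in[t_k,t_{k+1}]$, together with $|x(t)-y(t)|\le \|Dh(z(t))\|\cdot$ type Lipschitz estimates for the $C^1$ differences.

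Next I would bound each of the five terms~(\ref{thirdF}a--e) and the three surviving $g$-terms~\eqref{secondG1a},~\eqref{secondG1c} (and the integration-by-parts refinement~\eqref{thirdFdp} of~\eqref{thirdFd}) one at a time. Term~\eqref{thirdFa} is $(h_k/2)\int_0^1 Df(z(s))ds\,(x(t_{k+1})-y(t_{k+1}))$, whose norm is at most $(h_kL/2)\|x(t_{k+1})-y(t_{k+1})\|$; this is the term moved to the left-hand side to give the factor $(1-h_kL/2)$. Terms~\eqref{thirdFb},~\eqref{thirdFc},~\eqref{thirdFe} each carry a factor $(t-t_{k+1/2})$ (whose integral of absolute value over the step is $h_k^2/4$, or a first moment $h_k^2/8$ depending on grouping) times a Lipschitz difference $\|x(t)-y(t)\|$ which is $O(h_k)$ via Theorem~\ref{lnt}, yielding the $(h_k^3/4)$-type contributions with coefficients $L\cdot L$, $L\cdot L'$, and the $D^2f$-contribution $H\cdot(K+K')$ from~\eqref{thirdFe} after expanding $Df(x)-Df(y)$ via the mean value theorem and using $\|\dot y\|\le K+K'$. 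The once-integrated-by-parts term~\eqref{thirdFdp} contributes the remaining $H K'$ and $L L'$ pieces: the boundary term is controlled by $|\int_{t_k}^{t}(s-t_{k+1/2})(v-w)ds|$, the integrand term by $\|D^2f\,g+Df\,Dg\|\le HK'+LL'$ times $\|\dot x\|\le K+K'$ times the iterated integral of $|v-w|$, and combining these moments gives the $11h_k^3/24$ coefficient. The $g$-terms~\eqref{secondG1a} and~\eqref{secondG1c} produce the $(h_k^2/3)$ pieces: $\eqref{secondG1a}$ is bounded by $\int|Dg_i||x-y||w_i|\,dt\le L'\cdot 2h_kK'(e^{\Lambda h_k}-1)/(\Lambda h_k)\cdot h_k$ giving $2K'L'(e^{\Lambda h_k}-1)/(\Lambda h_k)\cdot h_k^2$, contributing to the $3K'L'$ term, while $\eqref{secondG1c}$ is bounded by $\sum_i\int\|Dg_i\|\|\dot x\||\hat v_i-\hat w_i|\,dt\le L'(K+K')\cdot h_k^2/3$.

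Finally I would collect all the pieces, sorting them by the power of $h_k$ and by whether they carry the factor $(e^{\Lambda h_k}-1)/(\Lambda h_k)$, and verify that they sum exactly to the right-hand side of the claimed inequality: the $h_k^2/3$-group $(3K'L'(e^{\Lambda h_k}-1)/(\Lambda h_k) + L'(K+K'))$, the $h_k^3/4$-group $K'(LL'+L^2+H(K+K'))(e^{\Lambda h_k}-1)/(\Lambda h_k)$, and the $11h_k^3/24$-group $(HK'+LL')(K+K')$. The main obstacle I anticipate is bookkeeping: getting the precise numerical constants ($1/3$, $1/4$, $11/24$) right requires carefully evaluating the correct moments of $(t-t_{k+1/2})$ against the various bounds on $|v-w|$, $|\hat v-\hat w|$, and $\int_{t_k}^t(\cdot)$, and attributing each elementary bound to the correct group in the final sum; a secondary subtlety is making sure that moving only~\eqref{thirdFa} (and not, say, part of~\eqref{secondG1a}) to the left-hand side is consistent, i.e.\ that every other occurrence of $\|x(\cdot)-y(\cdot)\|$ has genuinely been replaced by its $O(h_k)$ logarithmic-norm estimate rather than by $\|x(t_{k+1})-y(t_{k+1})\|$ itself. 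Once the accounting is done, no further analytic input beyond Theorems~\ref{MVT} and~\ref{lnt} and the bounds~\eqref{bounds} is needed.
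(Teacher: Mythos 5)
Your proposal follows essentially the same route as the paper: the paper's own proof is just the two-sentence remark that the bounds of Theorem~\ref{case2a} still apply and that the formula follows ``by taking norms of terms in equations~\eqref{thirdF} and~\eqref{secondG1}," which is exactly the decomposition, the move of~\eqref{thirdFa} to the left-hand side, and the term-by-term estimates you describe. Your write-up simply supplies the bookkeeping the paper omits.
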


\begin{proof}
The same bounds on functions apply as in Theorem \ref{case2a}.
The formula for $\|x(t_{k+1}) - y(t_{k+1})\|$ is then obtained by taking norms of terms in equations~\eqref{thirdF} and~\eqref{secondG1}.
\end{proof}

\begin{remark}
The computation of the error bound is complicated by that fact that $|v_i(t)-w_i(t)|$ is not uniformly small.
This means that the terms $g(x)(v_i-w_i)$ must be integrated over a complete time step in order to be able to use the fact that
$\int_{t_k}^{t_{k+1}} v_i(t)\,dt = \int_{t_k}^{t_{k+1}} w_i(t)\,dt$, and this must be done \emph{without} first taking norms inside the integral.
As a result, we cannot apply results on the logarithmic norm exactly directly.
Instead, we ``bootstrap'' the procedure by applying a first-order estimate for $\|x(t)-y(t)\|$ valid for any $t\in[t_k,t_{k+1}]$.
\end{remark}

\subsubsection{Local error $O(h^2)+O(h^3)$}
\label{sec:twoparametererror}

 We can attempt to improve the error bounds by allowing $w_i(t)$ to have two independent parameters.
In the general case, we shall see that this gives rise to a local error estimate containing terms of $O(h^2)$ and $O(h^3)$, rather than the anticipated pure $O(h^3)$ error.

We require $w_i(t)$ to satisfy the equations
\be\label{se2}
      \int_{t_k}^{t_{k+1}} v_i(t) - w_i(t) \, dt = 0; \qquad
     \int_{t_k}^{t_{k+1}} (t-t_{k+1/2})\,\,(v_i(t) - w_i(t)) \, dt = 0.
\ee

If the $w_i$ are taken to be affine functions, $w_i(t)=a_{i,0}+a_{i,1}(t-t_{k+1/2})/h_k$, then we have

\be
 \label{eq:polynomialparameterformulae}
a_{i,0} = \frac{1}{h_k}\, \int_{t_k}^{t_{k+1}} v_i(t)dt; \qquad
a_{i,1} = \frac{12}{h_k^2} \, \int_{t_k}^{t_{k+1}} v_i(t)\,(t-t_{k+1/2}) \, dt.
\ee
\noindent It is easy to see that
\be \label{eq:polynomialparameterbounds}
  |a_{i,0}|\le V_i, \ |a_{i,1}|\le 3\,V_i, \ |w_i(t)| \le 5V_i/2,\ \text{and}\ |\dot w(t)|\le 3V_i/2h_k 
\ee
\noindent and it can further be shown that
\be \label{eq:polynomialquadraticparameterbounds}
  |a_{i,1}|\le 3V_i(1-(a_{i,0}/V_i)^2) .
\ee

An alternative is to use step functions for $w_i$, such as
\begin{displaymath}
   w_i(t) = \left\{
     \begin{array}{rl}
       a_{i,0} & \text{if } t_k\le t < t_{k+1/2}\\
       a_{i,1} & \text{if } t_{k+1/2} \le t \le t_{k+1}.
     \end{array}
   \right.
\end{displaymath}
\noindent Then
\begin{align*}
a_{i,0} &= \frac{1}{h_k}\, \int_{t_k}^{t_{k+1}} v_i(t)\,dt - \frac{4}{h_k^2} \, \int_{t_k}^{t_{k+1}} v_i(t)(t-t_{k+1/2})\,dt\\[3\jot]
a_{i,1} &= \frac{1}{h_k}\, \int_{t_k}^{t_{k+1}} v_i(t)\,dt + \frac{4}{h_k^2} \, \int_{t_k}^{t_{k+1}} v_i(t)(t-t_{k+1/2}) \, dt.
\end{align*}
\noindent Hence
\be
  |a_{i,0}|\le 2\, V_i, \ |a_{i,1}|\le 2\,V_i, \ \text{and}\ |w_i(t)| \le 2\,V_i.
\ee

\medskip

\begin{theorem}\label{case2b}
For any $k\ge 0$, and all $i=1,...,m$, if
\begin{itemize}
 \item $f(\cdot)$ is  $C^2$ vector function,
 \item $g_i(\cdot)$ are non-constant $C^2$ functions,  and
 \item the $w_i$ satisfy~\eqref{se2},
\end{itemize}
\noindent then an error of $O(h^2)$ is obtained.
Moreover, if the $w_i$ are affine functions, $w_i(t)=a_{i,0}+a_{i,1}(t-t_{k+1/2})/h_k$, then a formula for calculation of the error is given by
\begin{align*}
&\left(1-L(h_k/2) - h_k L'\right)\|x(t_{k+1}) - y(t_{k+1})\| \le (h_k^2/4) L'\,\left(11 K + (69/2) K' \right) \\
&\qquad\qquad +(7 h_k^3/8)\,K'\,\left( (4H'+H)\, (K +(5/2)K') + L^2 + \left((9/2) L + 5L' \right)L'\right)
    \frac{e^{\Lambda h_k} -1}{\Lambda h_k}\\
&\qquad\qquad+(7h_k^3/48)  \left( H\,K' + L\,L' \right) \left( K + K'  \right).
\end{align*}
\end{theorem}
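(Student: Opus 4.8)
The plan is to repeat the bootstrap strategy used in the proofs of Theorems~\ref{case2a} and~\ref{case2a1}, but now starting from the decomposition~\eqref{thirdF} for the $f$-part together with~\eqref{thirdG}--\eqref{thirdGp} for the $g$-part, and to carry along the auxiliary bounds~\eqref{eq:polynomialparameterbounds} and the moment conditions~\eqref{se2}. First I would establish the first-order bound $\|x(t)-y(t)\|\le 2h_k K' (e^{\Lambda h_k}-1)/(\Lambda h_k)$ for all $t\in[t_k,t_{k+1}]$ exactly as before, by comparing both $x$ and $y$ to the solution $z$ of $\dot z=f(z)$ via Theorem~\ref{lnt}; this is the ``bootstrap'' estimate that every $O(h^3)$ integral term will be multiplied against. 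Next I would record the elementary consequences of~\eqref{se2}: that~\eqref{secondG1b}-type boundary terms vanish, that $\hat v_i(t)-\hat w_i(t)$ and $\hat{\hat v}_i(t)-\hat{\hat w}_i(t)$ are $O(h_k^2)$ and $O(h_k^3)$ respectively with explicit constants, and in particular the sharper integral bounds $\int_{t_k}^{t_{k+1}}|\hat v_i-\hat w_i|\,dt$ and $\int_{t_k}^{t}(s-t_{k+1/2})(v_i-w_i)\,ds$ that are needed to turn~\eqref{thirdFdp} and~\eqref{thirdGep}--\eqref{thirdGgp} into genuine $O(h_k^3)$ quantities. Using $|w_i(t)|\le 5V_i/2$ from~\eqref{eq:polynomialparameterbounds} is what produces the $(5/2)K'$ factors in the claimed formula, and $|\dot w_i(t)|\le 3V_i/(2h_k)$ handles the $\dot y$ terms.

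Then I would go term by term. Term~\eqref{thirdFa} contributes $(h_k/2)L\|x(t_{k+1})-y(t_{k+1})\|$, which is moved to the left-hand side; the $g$-analogue~\eqref{thirdGa}, bounded using $\|Dg_i\|\le L_i$ and $|\hat w_i(t_{k+1})|\le (5/2)V_i h_k$, contributes $h_k L'\|x(t_{k+1})-y(t_{k+1})\|$, also moved left — this explains the factor $(1-L(h_k/2)-h_k L')$. Term~\eqref{thirdGb} is exactly cancelled against the boundary term~\eqref{secondG1b}/\eqref{thirdFd}-residue by~\eqref{se2}, or rather is itself $O(h_k^2)$ via $|\hat v_i-\hat w_i|\le V_i h_k/2$ at the endpoint — I would check which of these is intended and note that it feeds the leading $(h_k^2/4)L'(11K+(69/2)K')$ collection together with the other genuinely-$O(h_k^2)$ leftovers~\eqref{thirdFb}, \eqref{thirdFc}, \eqref{thirdFe}, \eqref{thirdGc}, \eqref{thirdGd}, \eqref{thirdGf}, each estimated by pulling out $\|D(\cdot)\|\le L$ or $L_i$, a sup-norm bound on the remaining factor, the first-order bound on $\|x-y\|$ or the trivial $|x-y|\le\cdots$, and integrating $\int|t-t_{k+1/2}|\,dt = h_k^2/4$. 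The remaining pieces — the integrated-by-parts terms~\eqref{thirdFdp}, \eqref{thirdGep}, \eqref{thirdGgp} — are where $H$, $H_i$, $H'$ and the products $LL'$, $L^2$ enter: each is a product of a bounded $D^2$-or-$DD$ factor, $\|\dot x\|\le K+K'$ (or $\|g_j\|\le K_j$), and an $O(h_k^3)$ iterated-integral-of-$(v-w)$ factor, and collecting the $e^{\Lambda h_k}$-weighted ones versus the unweighted ones gives the two $h_k^3$ brackets in the statement.

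The main obstacle I expect is exactly the bookkeeping of constants: getting the coefficient $7/8$ in front of the big $h_k^3$ bracket, the $(4H'+H)$, $(9/2)L+5L'$ combinations, and the $7/48$ in the last term requires carefully tracking which terms carry a factor $|w_i|\le 5V_i/2$, which carry $|\hat w_i|\le(5/2)V_i h_k$, which carry the sharp $\int|\hat v_i-\hat w_i|\le V_i h_k^2/3$ versus the cruder $|\hat v_i-\hat w_i|\le V_i h_k/2$ pointwise, and whether the first-order $\|x-y\|$ bound (with its $e^{\Lambda h_k}$ factor) or the $O(h_k)$ ``trivial'' bound $2K h_k$ is being invoked in each integrand. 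The double sum~\eqref{thirdGf}--\eqref{thirdGgp} over $i,j$ is the most delicate, since there $|x-y|$ appears multiplied by $w_j\hat w_i$-type products and one must decide whether to use the mean-value form or the bootstrap form. The structure of the argument is identical to Theorem~\ref{case2a1}, only longer; no new idea is needed beyond the observation that~\eqref{se2} kills both the zeroth and first moments of $v_i-w_i$, so that the genuinely problematic terms~\eqref{thirdFd}/\eqref{thirdGg} survive only at third order after the extra integration by parts, while everything that is ``merely'' $O(h_k^2)$ comes from the $g_i$ being non-constant (so $Dg_i\neq 0$) and cannot be removed with only two parameters — which is why the stated bound is $O(h^2)+O(h^3)$ rather than pure $O(h^3)$.
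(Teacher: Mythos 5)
Your overall strategy (the decomposition~\eqref{thirdF} together with~\eqref{thirdG}, moving~\eqref{thirdFa} and~\eqref{thirdGa} to the left-hand side, and bootstrapping with a first-order bound on $\|x(t)-y(t)\|$ from Theorem~\ref{lnt}) is the same as the paper's, but two of your ingredients are wrong in ways that matter. First, the bootstrap constant: you propose to use $\|x(t)-y(t)\|\le 2h_kK'\,(e^{\Lambda h_k}-1)/(\Lambda h_k)$ ``exactly as before.'' That bound is correct in Theorem~\ref{case2a}, where $w_i$ is the constant mean value and $|w_i|\le V_i$; here the $w_i$ are affine with two matched moments, so by~\eqref{eq:polynomialparameterbounds} one only has $|w_i(t)|\le 5V_i/2$, hence $\|\dot y(t)-f(y(t))\|\le (5/2)K'$ and the comparison of $x$ and $y$ with the solution of $\dot z=f(z)$ gives $\|x(t)-y(t)\|\le \tfrac{7h_k}{2}K'\,(e^{\Lambda h_k}-1)/(\Lambda h_k)$, which is the bound the paper actually uses. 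This is not cosmetic bookkeeping: the coefficients $7/8=\tfrac72\cdot\tfrac14$ and $7/48=\tfrac72\cdot\tfrac1{24}$ that you flag as mysterious come precisely from this factor $7/2$, and with your constant $2$ you would derive a different (and, relative to the stated theorem, incorrect) formula.

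Second, your handling of~\eqref{thirdGg} is internally inconsistent and does not match what the theorem can deliver. The paper's proof upgrades only~\eqref{thirdFd} to~\eqref{thirdFdp} and~\eqref{thirdGe} to~\eqref{thirdGep}; the cross term~\eqref{thirdGg} is \emph{not} integrated by parts to third order, because that step requires the additional moment condition~\eqref{a3}, which two-parameter affine $w_i$ cannot in general satisfy (only the diagonal $i=j$ part, where $v_i\hat v_i=\tfrac12\tfrac{d}{dt}\hat v_i^2$, can be pushed to $O(h^3)$). The off-diagonal part of~\eqref{thirdGg} must be bounded directly at $O(h_k^2)$, and it is the source of the leading $(h_k^2/4)L'(11K+(69/2)K')$ term and the reason the counterexample immediately following the theorem exists. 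You simultaneously list~\eqref{thirdGgp} among the terms you would use and assert that~\eqref{thirdGg} ``survives only at third order after the extra integration by parts''; if that were true the bound would be pure $O(h^3)$, contradicting the statement you are proving. Fixing these two points (bootstrap constant $7/2$ instead of $2$; leave~\eqref{thirdGg} at second order) brings your argument into line with the paper's proof.
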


\begin{proof}
With the assumptions of the theorem, we can improve the terms (\ref{thirdFd}) and (\ref{thirdGe})
such that they become (\ref{thirdFdp}) and~\eqref{thirdGep}, which are of $O(h^3)$.
In addition to the bounds obtained in~\eqref{eq:polynomialparameterbounds}, we use
\begin{align*}
 \|\dot x(t)\| & \le K + \sum_{i=1}^{m} K_i\,V_i\,=\,K+K'\\
 \|\dot y(t)\| & \le K +  \frac{5}{2} \sum_{i=1}^{m} K_i\,V_i=K+(5/2)K'\\
 \|x(t)-y(t)\| & \le \frac{7h_k}{2} \left(\sum_{i=1}^{m} K_i\,V_i\right)\frac{e^{\Lambda h_k} -1}{\Lambda\,h_k}=\frac{7h_k}{2}\, K'\,\frac{e^{\Lambda h_k} -1}{\Lambda\,h_k}.
\end{align*}

\noindent The formula for the error, $\|x(t_{k+1}) - y(t_{k+1})\| $ with terms (\ref{thirdFdp}) and (\ref{thirdGep})
is then easily obtained. The theorem is proved.
\end{proof}

\medskip

 We now show that with the assumptions of the theorem we cannot in general obtain an error of $O(h^3)$.
Specifically, we assume that $w_i(t)$ are two-parameter polynomial or step functions satisfying
\[ \int_{t_k}^{t_{k+1}} v_i(t)-w_i(t)\,dt = \int_{t_k}^{t_{k+1}} (t-t_{k+1/2})\,(v_i(t)-w_i(t))\,dt = 0 .\]
The following counterexample gives a system for which only $O(h^2)$ local error is possible.

\begin{example}

\noindent Consider the following input-affine system which satisfies assumptions in Theorem~\ref{case2b}:
\begin{equation*}
 \dot{x}_1 = x_2 + v_1 + x_1 v_2; \quad \dot{x}_2 = x_1 + v_2;  \quad  x(t_k)=x_k.
\end{equation*}
Take inputs
\begin{equation*}
v_1(t)=\sin\left(\frac{2\pi}{h_k}(t-t_k)\right), \qquad
v_2(t)=\cos\left(\frac{2\pi}{h_k}(t-t_k)\right).
\end{equation*}

\noindent Using~(\ref{se2}), we get $w_1(t)=-(6/\pi \, h_k)(t-t_{k+1/2})$, $w_2(t)=0$.
Therefore, an approximation equation looks like

\begin{equation*}
 \dot{y}_1 = y_2 + w_1; \quad \dot{y}_2 = y_1
\end{equation*}

As shown in the previous section, the only term which might not have order
$h_k^3$ is the term in~\eqref{thirdGg}
which is reduced to

\[
 \sum_{i=1}^2 \int_{t_k}^{t_{k+1}} Dg_2(x(t))g_i(x(t))\,v_i(t) \hat{v}_2(t) dt,
\]

\noindent since $Dg_1=0$. When $i=2$, we have $\frac{1}{2} \frac{d}{dt}(\hat{v}_i^2(t))= v_i(t)\hat{v}_i(t)$,
and hence we can integrate by parts once more to get the $O(h^3)$. Then we are left with
\begin{align*}
\int_{t_k}^{t_{k+1}} Dg_2(x(t))g_1(x(t))\,v_1(t) \hat{v}_2(t) dt = -\frac{h_k^2}{4\pi}\,\, [1\,\,\, 0]^T ,
\end{align*}
\noindent a term of $O(h^2)$.
\end{example}

\subsubsection{Local error of $O(h^3)$}
\label{sec:twoparameteradditiveinputerror}

 We showed that for a general input-affine system, a local error of order
$O(h^3)$ cannot be obtained using affine approximate inputs $w(a,t)$. However, if in addition, we assume that $g_i(\cdot)$
are constant functions or we have a single input then we can obtain a local error of $O(h^3)$.
If $g_i(\cdot)$ are constant functions, then the error calculation is equivalent to the error calculation
of an even simpler case, so called additive noise case. The equation is then given by
\be\label{an}
\dot x(t) = f(x(t)) + v(t).
\ee
\noindent Here, $v(t)=(v_1(t),...,v_n(t))$ is vector-valued.

\begin{corollary}\label{case3a}
 For any $k\ge 0$,
\begin{itemize}
 \item if the system has additive noise,
 \item $f(\cdot)$ is a $C^2$ function, and
 \item $w_i(t)$ are real valued functions defined on $[t_k,t_{k+1}]$ which satisfy equations~\eqref{se2},
\end{itemize}
\noindent then an error of $O(h^3)$ is obtained. Moreover, for $w_i(t)=a_{i,0}+a_{i,1}(t-t_{k+1/2})/h_k$, the formula for the local error is given by:
\be\label{ine}
\begin{aligned}
\bigl( 1-(h_k/2) L \bigr)\|x(t_{k+1})-y(t_{k+1})\| &\le \frac{7}{48}\, h_k^3\,K'\,H\,(K+K')\\
 &\,\,+ \frac{7}{8}\,h_k^3\,K'\,\Bigl(L^2\, +\, H\,(K+5K'/2)\Bigr)\frac{e^{\Lambda h_k}-1}{\Lambda\,h_k}.
\end{aligned}
\ee
\end{corollary}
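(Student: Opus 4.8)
The plan is to specialize the third-order error derivation of Section~\ref{errde} to the additive-noise case $\dot x = f(x) + v$, where all $g_i$ are the constant unit vectors, and then collect norms. When $g_i$ is constant we have $Dg_i \equiv 0$ and $D^2 g_i\equiv 0$, so the $G$-part of the error simplifies dramatically: every term in~\eqref{thirdG} and~\eqref{thirdGp} carrying a factor $Dg_i$ or $D^2g_i$ vanishes identically, and the only surviving contribution from~\eqref{firstG} is $\sum_i g_i(x_{k+1})(\hat v_i(t_{k+1}) - \hat w_i(t_{k+1}))$, which is zero by the first equation of~\eqref{se2}. Thus the entire error reduces to the $F$-part, i.e.\ to equation~\eqref{thirdF} together with the integration-by-parts refinement~\eqref{thirdFp} of the term~\eqref{thirdFd}. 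Moreover, since the noise is additive, the terms~\eqref{thirdFc} (which carries $g_i(x)-g_i(y)$, now zero because $g_i$ is constant) also drops, so we are left with~\eqref{thirdFa}, \eqref{thirdFb}, \eqref{thirdFe}, and the refined~\eqref{thirdFdp}.

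Next I would bound each surviving term. For~\eqref{thirdFa}, $\|(h_k/2)\int_0^1 Df(z(s))\,ds\;(x_{k+1}-y_{k+1})\| \le (h_k/2)\,L\,\|x_{k+1}-y_{k+1}\|$, which is the source of the $(1-(h_k/2)L)$ factor on the left-hand side of~\eqref{ine}. For~\eqref{thirdFb} and~\eqref{thirdFe}, I would use the pointwise mean-value bound $|h(x(t))-h(y(t))|\le \|Dh(z(t))\|\,|x(t)-y(t)|$ together with the ``bootstrap'' first-order estimate for $\|x(t)-y(t)\|$ on $[t_k,t_{k+1}]$ coming from Theorem~\ref{lnt}; with additive noise and $|v_i|\le V_i$ the relevant disturbance bound is $\delta(t)=K'$, giving $\|x(t)-y(t)\|\le \tfrac{7}{2}h_k K' \frac{e^{\Lambda h_k}-1}{\Lambda h_k}$ under the two-parameter choice (as in Theorem~\ref{case2b}), and these two terms each contribute $O(h_k^3)$ with coefficients proportional to $L^2$ and $H$. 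For the refined term~\eqref{thirdFdp}, with $g_i$ constant the factor $D^2f(x)g_i(x) + Df(x)Dg_i(x)$ becomes just $D^2f(x)\,g_i(x)$, bounded by $H$ times the $i$-th unit vector; the inner integral $\int_{t_k}^t (s-t_{k+1/2})(v_i(s)-w_i(s))\,ds$ must be bounded using the \emph{second} equation of~\eqref{se2}, so that it vanishes at both endpoints and is $O(h_k^2)$ in between (in fact one gets $|\int_{t_k}^t (s-t_{k+1/2})(v_i-w_i)ds|\le c\,V_i h_k^2$ for an explicit constant $c$), and together with $\|\dot x\|\le K+K'$ this yields the $O(h_k^3)$ term with coefficient $H(K+5K'/2)$ after using $|w_i|\le 5V_i/2$. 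Finally I would assemble the pieces, move the $(h_k/2)L\|x_{k+1}-y_{k+1}\|$ term to the left, and verify the stated coefficients $\tfrac{7}{48}$ and $\tfrac{7}{8}$ by tracking the factors of $h_k/2$, $h_k^2/4$ (from $\int (t-t_{k+1/2})\,dt$-type bounds), and $\tfrac{7}{2}$ (from the bootstrap estimate).

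The main obstacle is the careful bookkeeping of the boundary term and the residual integral in~\eqref{thirdFdp}: one must exploit \emph{both} moment conditions in~\eqref{se2} \emph{without} taking norms prematurely inside the integral, exactly as flagged in the Remark following Theorem~\ref{case2a1}. Concretely, the quantity $\int_{t_k}^{t}(s-t_{k+1/2})(v_i(s)-w_i(s))\,ds$ is not uniformly $O(h_k^3)$ pointwise unless one uses that its value at $t=t_{k+1}$ is zero (second condition in~\eqref{se2}) and at $t=t_k$ is trivially zero, so the boundary term in~\eqref{thirdFdp} disappears and what remains is a genuine $O(h_k^3)$ integral. Getting the sharp numerical constants ($7/48$, $7/8$) right is then a routine but delicate matter of combining the elementary estimates $|a_{i,0}|\le V_i$, $|a_{i,1}|\le 3V_i$, $|w_i|\le 5V_i/2$ from~\eqref{eq:polynomialparameterbounds} with the integral bounds; I would not expect any conceptual difficulty here beyond that already present in the proof of Theorem~\ref{case2b}, of which this corollary is essentially the $Dg_i\equiv 0$ specialization.
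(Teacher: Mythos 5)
Your proposal is correct and takes essentially the same route as the paper: the paper's own proof is just the observation that constant $g_i$ force $L'=H'=0$, so the formula follows by substituting these into Theorem~\ref{case2b}, which is precisely the specialization you carry out term by term. One minor bookkeeping slip: the coefficient $H(K+5K'/2)$ actually arises from term~\eqref{thirdFe} via $\|\dot y\|\le K+(5/2)K'$, whereas the refined term~\eqref{thirdFdp} is the source of the $\tfrac{7}{48}h_k^3K'H(K+K')$ contribution with $\|\dot x\|\le K+K'$.
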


\noindent The formula for the error in additive noise case is simplified
because $L'=H'=0$. If we write $||v(t)||= K'$, then the result follows directly from Theorem~\ref{case2b}.

\begin{corollary}\label{case3b}
For any $k\ge 0$, if
\begin{itemize}
\item the input-affine system has single input, i.e., $m=1$ in~\eqref{ca}
\item $f(\cdot)$ and $g(\cdot)$ are $C^2$ functions, and
 \item $w(t)$ is a real valued function defined on $[t_k,t_{k+1}]$ which satisfies equations~\eqref{se2},
\end{itemize}
\noindent then an error of $O(h^3)$ is obtained. Moreover, for $w(t)=a_{0}+a_{1}(t-t_{k+1/2})$, the formula for
the local error is given by
\begin{align*}
 &\left( 1-(h_k/2) L - h_kL'\right)\|x(t_{k+1})-y(t_{k+1})\| \le \\
 &\frac{7\,h_k^3}{8}\,K' \left((H\,+\,10\,H')(K+(5/2)K') + L^2\, +\,(25/2)\,L\,L'\,+\,25\,(L')^2\right)\frac{e^{\Lambda h_k}-1}{\Lambda\,h_k}\\
 &+ \frac{h_k^3}{48}\,(K+K')\,\left((7/6)(H\,K'+L\,L') + 28\,(H'\,K+L\,L') + 29\,(H'\,K'+(L')^2) \right).
\end{align*}
\end{corollary}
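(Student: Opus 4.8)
The plan is to specialize the general single-step error decomposition from Section~\ref{errde} to the single-input case $m=1$ and exhibit that every surviving term is $O(h_k^3)$, then collect the explicit constants. With $m=1$, all the double sums $\sum_{i,j}$ in~\eqref{thirdG} collapse to a single term, so the only potentially problematic contribution is~\eqref{thirdGg} with $i=j=1$, namely an integral involving $Dg(x)\,g(x)\,(v(t)\hat v(t) - w(t)\hat w(t))$. First I would observe that in the single-input case $v(t)\hat v(t) = \tfrac12 \tfrac{d}{dt}\hat v(t)^2$ and likewise $w(t)\hat w(t) = \tfrac12 \tfrac{d}{dt}\hat w(t)^2$, so the bracketed difference is an exact derivative, allowing the extra integration by parts that leads to~\eqref{thirdGgp}; the boundary term vanishes at $t_k$ trivially and at $t_{k+1}$ because the conditions~\eqref{se2} force $\hat v(t_{k+1})=\hat w(t_{k+1})$ and $\hat{\hat v}(t_{k+1}) = \hat{\hat w}(t_{k+1})$ (the second moment condition), leaving only the $O(h_k^3)$ remainder integral in~\eqref{thirdGgp}. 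This is the structural reason the single-input case escapes the $O(h^2)$ obstruction of the counterexample following Theorem~\ref{case2b}: there $Dg_1$ and $Dg_2$ were genuinely distinct, so the cross term $Dg_2\,g_1\,v_1\hat v_2$ was not an exact derivative.

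Next I would assemble the bounds. From~\eqref{se2} with affine $w$, the coefficients are given by~\eqref{eq:polynomialparameterformulae}, so $|a_0|\le V$, $|a_1|\le 3V$, $|w(t)|\le 5V/2$, $|\dot w(t)|\le 3V/2h_k$ as in~\eqref{eq:polynomialparameterbounds} (with $m=1$, so $K'=VK_1$, $L'=VL_1$, $H'=VH_1$). I would record $\|\dot x\|\le K+K'$, $\|\dot y\|\le K+\tfrac52 K'$, and the bootstrapped first-order estimate $\|x(t)-y(t)\|\le \tfrac{7h_k}{2}K'\,\frac{e^{\Lambda h_k}-1}{\Lambda h_k}$ for $t\in[t_k,t_{k+1}]$, exactly as in the proof of Theorem~\ref{case2b}. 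I also need the integral bounds $|\hat v(t)-\hat w(t)|\le$ (const)$\,Vh_k$ and $\int_{t_k}^{t_{k+1}}|\hat{\hat v}(t)-\hat{\hat w}(t)|\,dt = O(V h_k^3)$, obtained by the same telescoping argument used for $\int|\hat v-\hat w|$ in Theorem~\ref{case2a}.

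Then I would take norms term by term in~\eqref{thirdF}, \eqref{thirdFdp}, \eqref{thirdG}, \eqref{thirdGep}, \eqref{thirdGgp}. The term~\eqref{thirdFa}, carrying a factor $h_k/2$ and $Df$, produces the $-\tfrac{h_k}{2}L\|x(t_{k+1})-y(t_{k+1})\|$ that moves to the left-hand side; the term~\eqref{thirdGa}, carrying $Dg$ and $\hat w(t_{k+1})$ with $|\hat w(t_{k+1})|\le$ (const)$\,Vh_k$, contributes the $-h_k L'\|x(t_{k+1})-y(t_{k+1})\|$. The terms~\eqref{thirdGb} and~\eqref{thirdFdp} boundary-type expressions involving $\hat v - \hat w$ vanish or are absorbed using~\eqref{se2}; everything else is a product of bounded factors times either $\|x-y\|_\infty$ or an integral of $|\hat{\hat v}-\hat{\hat w}|$, hence $O(h_k^3)$. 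Grouping the contributions multiplied by $\frac{e^{\Lambda h_k}-1}{\Lambda h_k}$ (those coming through the logarithmic-norm estimate for $\|x-y\|$) separately from the purely algebraic $O(h_k^3)$ pieces yields the stated two-line formula; the coefficients $7/8$, $1/48$, $7/6$, $28$, $29$ are just the arithmetic of combining $7/2$ from $\|x-y\|$, the constants in~\eqref{eq:polynomialparameterbounds}, and the $1/3$, $1/4$-type constants from the polynomial integrals $\int (t-t_{k+1/2})\,dt$ etc.

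The main obstacle I expect is \emph{bookkeeping rather than ideas}: correctly tracking which terms carry the factor $5/2$ (from $|w|\le 5V/2$) versus $3$ (from $|a_1|\le 3V$) versus $3/2$ (from $|\dot w|$), and making sure the $D^2 f\cdot g$ and $Df\cdot Dg$ pieces in~\eqref{thirdFdp}, the $D^2 g\cdot f$ and $Dg\cdot Df$ pieces in~\eqref{thirdGep}, and the $D^2 g\cdot g$ and $(Dg)^2$ pieces in~\eqref{thirdGgp} are each paired with the right integral bound ($\int|\int(s-t_{k+1/2})(v-w)ds|\,dt$ versus $\int|\hat{\hat v}-\hat{\hat w}|\,dt$). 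A secondary subtlety is justifying that the left-hand coefficient $1-(h_k/2)L - h_k L'$ is positive for the step sizes under consideration, so that the inequality can be divided through — but this is the same implicit smallness-of-$h_k$ assumption already used in Theorems~\ref{case2a1} and~\ref{case2b}, so I would simply state it in the same spirit.
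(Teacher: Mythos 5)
Your proposal follows essentially the same route as the paper's proof: identify~\eqref{thirdGg} as the only term not already $O(h_k^3)$, use the exact-derivative identity $v\hat v=\tfrac12\tfrac{d}{dt}\hat v^2$ to integrate by parts once more, observe that the boundary term vanishes because $\hat v(t_{k+1})=\hat w(t_{k+1})$, and then collect the constants using the same bounds as in Theorem~\ref{case2b}. The only slight imprecision is attributing the vanishing of that boundary term partly to $\hat{\hat v}(t_{k+1})=\hat{\hat w}(t_{k+1})$, which is not needed there (only the first-moment condition is), but this does not affect the argument.
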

\begin{proof}
The result follows since the only term which is not $O(h^3)$ in~(\ref{thirdF},\ref{thirdG}) is~\eqref{thirdGg}.
In the one-input case, this simplifies to
\[ \int_{t_k}^{t_{k+1}} Dg(x(t))\,g(x(t))\,\bigl(\hat{v}(t)\,v(t)-\hat{w}(t)\,w(t)\bigr)\,dt .\]
However, we can integrate by parts to obtain
\begin{align*}
 \eqref{thirdGg} &= \Bigl[ Dg(x(t))\,g(x(t))\,\bigl(\hat{v}(t)^2-\hat{w}(t)^2\bigr) \Bigr]_{t_k}^{t_{k+1}} \\
  &\qquad\qquad - \int_{t_k}^{t_{k+1}} D\bigl(Dg(x(t))\,g(x(t))\bigr)\,\dot{x}(t)\,\bigl(\hat{v}(t)^2-\hat{w}(t)^2\bigr)\,dt .
\end{align*}
The first term vanishes since $\hat{v}(t_{k_1})=\hat{w}(t_{k+1})$, and the second is $O(h^3)$ since $\hat{v}(t)$ and $\hat{w}(t)$ are $O(h)$.
Taking all the bounds as in Theorem \ref{case2b}, the formula is easily obtained.
\end{proof}
 Observing the error given by equations~\eqref{thirdF} and~\eqref{thirdG} , we see that if in addition to
satisfying equations given in~\eqref{se2}, the functions $w_i(\cdot)$ also satisfy
\be\label{a3}
 \int_{t_k}^{t_{k+1}} v_i(t)\hat{v}_j(t) - w_i(t)\hat{w}_j(t)\,\,dt\, =\, 0.
\ee
\noindent then we could get an error of $O(h^3)$. The question remains as to whether we can find
functions $w_i(\cdot)$ that satisfy the conditions~(\ref{se2},\ref{a3}).
Since the functions $w_i(\cdot)$ cannot be computed independently any more,
the number of parameters of each $w_i(\cdot)$ will depend on the number of inputs.

\begin{theorem}
 For any $k\ge 0$, if
\begin{itemize}
 \item $f(\cdot)$, $g_i(\cdot)$ are $C^2$ real vector functions, and
 \item $w_i(a_{i,0},...,a_{i,p\!-\!1},t)$ are real valued, defined on $[t_k,t_{k+1}]$, and satisfy
\be\label{se3}
\begin{gathered}
 \int_{t_k}^{t_{k+1}} v_i(t) - w_i(t) \, dt = 0\\
 \int_{t_k}^{t_{k+1}} (t-t_{k+1/2})\,(v_i(t) - w_i(t)) \, dt = 0\\
 \int_{t_k}^{t_{k+1}} v_i(t)\hat{v}_j(t) - w_i(t)\hat{w}_j(t)\,\,dt\, =\, 0,
\end{gathered}
\ee
\end{itemize}
\noindent for all $i,j=1,...,m$, then an error of $O(h^3)$ can be obtained.
Note that it suffices to take $j<i$ in~\eqref{se3}, and that the number of parameters $p$ in each $w_i$ must satisfy $p \geq (m+3)/2$.
Taking polynomials of minimal degree $d$, we obtain $d=\lceil(m+1)/2\rceil$.
\end{theorem}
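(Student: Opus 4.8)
The plan is to follow the same pattern as in the proofs of Theorems~\ref{case2b} and Corollary~\ref{case3b}: examine the error decomposition~(\ref{thirdF},\ref{thirdG}), identify which terms fail to be $O(h^3)$ under only the conditions~\eqref{se2}, and show that imposing the additional conditions~\eqref{se3} kills exactly those terms. From the earlier analysis we already know that every term in~(\ref{thirdF}) and~(\ref{thirdG}) is $O(h^3)$ except possibly the ``doubled-input'' term~\eqref{thirdGg}, namely $\sum_{i,j}\int_{t_k}^{t_{k+1}} Dg_i(x(t))\,g_j(x(t))\,\bigl(v_j(t)\hat{v}_i(t)-w_j(t)\hat{w}_i(t)\bigr)\,dt$. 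So the first step is to record this reduction, citing the derivation already carried out in Section~\ref{errde} and the counterexample discussion, so that the whole problem is localized to showing~\eqref{thirdGg} is $O(h^3)$.

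The second step is the actual estimate of~\eqref{thirdGg}. I would split the integrand's coefficient as $Dg_i(x(t))\,g_j(x(t)) = Dg_i(x(t_k))\,g_j(x(t_k)) + O(h)$ using the mean value theorem and the bounds $\|D(Dg_i\,g_j)\|$, $\|\dot x\|$ from~\eqref{bounds}. The $O(h)$ remainder, multiplied against $v_j\hat v_i - w_j\hat w_i$ which is already $O(h)$ (since $\hat v_i,\hat w_i = O(h)$ and the integrand is bounded), and integrated over an interval of length $h_k$, contributes $O(h^3)$. For the frozen-coefficient leading part, $Dg_i(x(t_k))\,g_j(x(t_k))\,\int_{t_k}^{t_{k+1}}\bigl(v_j(t)\hat v_i(t) - w_j(t)\hat w_i(t)\bigr)\,dt$, the integral vanishes identically by the third condition in~\eqref{se3}. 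Hence~\eqref{thirdGg} is $O(h^3)$ and the claimed local error follows. (This is morally the same integration-by-parts-then-freeze trick used for the single-input case in Corollary~\ref{case3b}, except there $\hat v\,v$ was an exact derivative; here we cannot integrate by parts but instead exploit the hypothesis~\eqref{se3} directly.)

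The third step is the parameter-counting remark. Each $w_i$ carries $p$ free parameters $a_{i,0},\dots,a_{i,p-1}$. The first two equations in~\eqref{se3} impose $2$ linear constraints on the parameters of $w_i$ alone. The third family of equations, $\int v_i\hat v_j - w_i\hat w_j = 0$, couples $w_i$ and $w_j$; since $\int v_i\hat v_j + \int v_j\hat v_i = \hat v_i(t_{k+1})\hat v_j(t_{k+1})$ is already determined by the zeroth-order constraints on $w_i$ and $w_j$, only the antisymmetric combination is a genuinely new constraint, so it suffices to impose~\eqref{se3} for $j<i$, giving $m-1$ additional constraints on $w_i$ in the worst case (for the largest $i$). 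Thus $w_i$ must absorb $2 + (i-1)$ constraints, worst case $2 + (m-1) = m+1$; distributing symmetrically one needs $p \ge (m+3)/2$ parameters per $w_i$, and a polynomial realizing $p$ independent parameters has degree $p-1$, giving $d = \lceil (m+1)/2\rceil$. I would present this as a short linear-algebra count: a nonempty affine solution set exists whenever the number of parameters is at least the number of independent constraints.

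The main obstacle is the bookkeeping in the second step — making rigorous the claim that the $O(h)$ error from freezing the coefficient $Dg_i(x(t))g_j(x(t))$ at $t_k$ really does interact with the $O(h)$ factor $v_j\hat v_i - w_j\hat w_i$ to give $O(h^3)$ after integration, and keeping track of the bounds ($H_i$, $L_i$, $K$, $K'$, etc.) so that an explicit constant could in principle be extracted as in the preceding theorems. The counting argument in the third step is routine but requires care in arguing that the symmetric part of the $\hat v$-bilinear constraint is automatically satisfied, so that only $j<i$ matters; this is the one place where a wrong count would change the stated degree.
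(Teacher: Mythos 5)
Your proposal matches the paper's proof in structure: reduce the problem to the single remaining non-$O(h^3)$ term~\eqref{thirdGg}, kill it using the third condition in~\eqref{se3}, and count equations ($m(m+3)/2$ after discarding the $i=j$ cases and the symmetric halves of the bilinear constraints, exactly as you argue via $\int v_i\hat{v}_j+v_j\hat{v}_i\,dt=\hat{v}_i(t_{k+1})\hat{v}_j(t_{k+1})$) against the $mp$ parameters to get $p\ge(m+3)/2$ and $d=\lceil(m+1)/2\rceil$. The only cosmetic difference is that the paper disposes of~\eqref{thirdGg} by one more integration by parts (formula~\eqref{thirdGgp}, whose boundary term vanishes by~\eqref{se3}) whereas you freeze the coefficient $Dg_i(x(t))\,g_j(x(t))$ at $t_k$ --- two equivalent ways of extracting the same cancellation, so your aside that one ``cannot integrate by parts'' here is not quite accurate, but harmless.
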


\begin{proof} If we can find $w_i(t)$ that satisfies above, then it is obvious that the only remaining $O(h^2)$ term~\eqref{thirdGg}
can be integrated by parts once more in order to give a term of $O(h^3)$.
This follows from Theorem \ref{case2}, Corollary~\ref{case3b} and the formulae in Section~\ref{errde}.

To see that we can find the desired functions $w_i(\cdot)$, we consider polynomial approximations $w_i$ of degree $d=p+1$.
We will show that it is possible to solve for the parameters of $w_i$'s.
If $m=1$, see Corollary \ref{case3b}. The system of equations~\eqref{se3} consists of at most $m+m+m(m-1)/2=m(m+3)/2$ independent equations.
To see that third equation in~\eqref{se3} has at most $m(m-1)/2$ independent equations necessary to be zero, notice that when $i=j$ we have
\begin{align*}
 \int_{t_k}^{t_{k+1}} v_i(t) \hat{v}_i(t) - w_i(t) \hat{w}_i(t)\, dt&= (1/2) [\hat{v}_i^2(t_{k+1})- \hat{w}_i^2(t_{k+1})],
\end{align*}
\noindent and therefore we can integrate by parts once more to get error of $O(h^3)$. When $j>i$ integration by parts gives
\begin{align*}
 \int_{t_k}^{t_{k+1}} v_i(t) \hat{v}_j(t) - w_i(t) \hat{w}_j(t)\, dt &= \bigl[\hat{v}_i(t)\,\hat{v}_j(t) - \hat{w}_i(t)\hat{w}_j(t)\bigr]_{t_k}^{t_{k+1}}\\
&\qquad\qquad - \int_{t_k}^{t_{k+1}} \hat v_i(t) {v}_j(t) -\hat w_i(t) {w}_j(t)\, dt
\end{align*}
and the first term vanishes since $\hat{v}_i(t_{k+1}) = \hat{w}_i(t_{k+1})$.
The number of parameters that each $w_i(\cdot)$ has is $p=d+1$.
Thus, in total, we have $mp$ parameters.
In order to guarantee that we can solve all the equations for the $w_i(\cdot)$'s,
we need that $mp \ge m(m+3)/2$. This implies that $p\ge (m+3)/2$.
Taking polynomials of minimal degree, we see that we require $d=\lceil(m+1)/2\rceil$.
\end{proof}
In what follows, we write $C(n,m)=n!/(m!\,(n-m)!)$, the formula for combinations
(selecting $m$ elements among $n$ elements).

\begin{table}[t!]
\begin{center}
\begin{tabular}{| c | c | c | c |}
  \hline
  \#Inputs & \#Equations & Degree & \#Parameters\\
  $ m $ & $m(m+3)/2$ & $d$ & $m(d+1)$ \\
  \hline
  1 & 2 & 1 & 2\\
  \hline
  2 & 5 & 2 & 6\\
  \hline
  3 & 9 & 2 & 9\\
  \hline
  4 & 14 & 3 & 16\\
  \hline
  5 & 20 & 3 & 20\\
  \hline
  6 & 27 & 4 & 30 \\
  \hline
  10 & 65 & 6 & 70 \\
  \hline
\end{tabular}
\end{center}
\caption{The number of independet equations which need to be solved, the minimal degree of a polynomial $w_i(\cdot)$ required, and number of available parameters in order to obtain $O(h^3)$ local error for $m$ inputs.}
\label{h3}
\end{table}

 In Table \ref{h3}, we present the degree of $w_i(\cdot)$ needed for one to obtain $O(h^3)$ for different number of inputs.
In addition, the number of equations involved and the number of independent parameters in $m$ functions that have to be found are given.

\subsubsection{Higher Order Local Error}

 It is possible to generalize the approach used to generate $O(h^3)$ local error.
 With additional smoothness requirements on the functions $f(\cdot)$ and $g_i(\cdot)$'s, we can get even higher-order local errors.
  In order to simplify the notation, we set $g_0=f$ and $v_0=1$. Then the input-affine system~\eqref{ca}
becomes
\[
 \dot x(t) = \sum_{i=0}^{m} g_i(x(t))v_i(t) .
\]

\noindent Let $g_i\in C^r$ for all $i=0,...,m$, and denote by
\[
 \dot y(t) = \sum_{i=0}^{m} g_i(y(t))w_i(a_i,t)
\]
\noindent the corresponding approximate system. The local error of $O(h^{r+1})$ can be obtained if $w_i(a_i,t)$ is finitely parametrised, $a_i=(a_{i,0},...,a_{i,d})$ with $d$ being sufficiently large, and satisfying

\begin{subequations}
\begin{align}
 \int_{t_k}^{t_{k+1}}v_i(t)\,dt &=\int_{t_k}^{t_{k+1}} w_i(t)\,dt \label{e1} \\
 \int_{t_k}^{t_{k+1}} v_j(t) \int_{t_k}^{t} v_i(s)\,ds \ dt &= \int_{t_k}^{t_{k+1}} w_j(t) \int_{t_k}^{t} w_i(s)ds\ dt \label{e2} \\
  \int_{t_k}^{t_{k+1}} v_k(t) \int_{t_k}^{t} v_j(s) \int_{t_k}^{s}v_i(r)\,dr\;ds\ dt &=  \int_{t_k}^{t_{k+1}} w_k(t) \int_{t_k}^{t} w_j(s) \int_{t_k}^{s}w_i(r)\,dr\;ds\ dt \label{e3}
\end{align}
\begin{multline}
\int_{t_k}^{t_{k+1}} v_{i_r}(s_r) \int_{t_k}^{s_r} v_{i_{r-1}}(s_{r-1})\cdots \int_{t_k}^{s_2} v_{i_1}(s_1)\,ds_1\,\cdots\,ds_{r-1}\,ds_r = \\ \qquad\qquad \int_{t_k}^{t_{k+1}} w_{i_r}(s_r) \int_{t_k}^{s_r} w_{i_{r-1}}(s_{r-1})\cdots \int_{t_k}^{s_2} w_{i_1}(s_1)\,ds_1\,\cdots\,ds_{r-1}\,ds_r
\end{multline}
\end{subequations}

\noindent We can restrict to $i\geq1$ in~\eqref{e1}.
In~\eqref{e2} we can restrict to $i\geq j+1$ as explained in previous subsection. %
In~\eqref{e3}, we can simplify to
\begin{equation*}
 \int_{t_k}^{t_{k+1}} v_k(t) \hat{v}_j(t) \hat{v}_i(t)\,dt =  \int_{t_k}^{t_{k+1}} w_k(t) \hat{w}_j(t) \hat{w}_i(t)\,dt; \qquad i,j,k\ge 0,\ j\le i
\end{equation*}

 Note that for the first two equalities above we need $m + C(m+1,2)$ equations, where $C(n,m)=n!/m!(n-m)!$, which in total gives $(m/2)(m^2+4m+7)$.
For the third one, we need additional $m + 3\,C(m+2,3)$.
In general, it is not easy to see the formula for the number of equations.
The number of parameters and the required degree for $O(h^4)$ are given by
$(m/2)(m^2+4m+7)$ and $N=\lceil(1/2)(m^2 + 4m + 5)\rceil$ respectively.

\section{Improvements and Generalizations}\label{GDI}

In this section, we consider techniques for improving the estimates obtained, and for generalizing the methods to differential inclusions with constraints.

\subsection{Improved approximate solution sets}

The previous error estimates were based on bounding the parameters appearing in the form of the input $w(t)$.
For example, supposing a single input $v(t)\in[-1,+1]$ and taking $w(t)=a_0 + a_1 (t-t_{k+1/2})/h_k$ satisfying $\int_{t_k}^{t_{k+1}} v(t)-w(t)\, dt = \int_{t_k}^{t_{k+1}} t\,v(t)-w(t)\, dt = 0$, we find $|a_0|\leq 1$ and $|a_1|\leq 3$.
However, if $a_0=\pm1$, then $v(t)\equiv \pm1$ on $[t_k,t_{k+1}]$, so $a_1=0$. Similarly, if $|a_1|=3$ then $a_0=0$.

For a given $a_0$, we can maximise $a_1$ by taking
\[ v(t) = \begin{cases} -1 \text{ for } t_k \leq t \leq t_k+\alpha h_k, \\
                        +1 \text{ for } t_k+\alpha h_k \leq t \leq t_k+h_k = t_{k+1}. \end{cases} \]
where $\alpha = (1-a_0)/2$. For this $v$, we find
\[ \begin{aligned}
     a_1 &= \frac{12}{h_k^2} \int_{t_k}^{t_{k+1}} (t-t_{k+1/2}) \, v(t)\,dt
        \ = \ \frac{12}{h_k^2} \biggl( \int_{\alpha h_k}^{h_k} (t-h_k/2)\,dt -  \int_{0}^{\alpha h_k} (t-h_k/2)\,dt \biggr) \\
         &= 3\bigl(1-(1-2\alpha)^2\bigr) \ =\ 3(1-a_0^2)
 \end{aligned} \]
yielding the constraint
\[ a_0^2 + |a_1| / 3 \leq 1 . \]
We can therefore set
\begin{equation}\label{eqn:reducedparameterdomain}
  w_k(t) = a_0 + { 3(1-a_0^2)b_1  }\,(t-t_{k+1})/h_k \quad \text{with} \quad a_0,b_1\in[-1,+1] .
\end{equation}
This will yield sharper estimates than~\eqref{eq:polynomialparameterformulae}.

\subsection{Differential inclusions with constraints}

Up to now, we have considered affine differential inclusions of the form
\[ \dot{x}(t) = f(x(t)) + \sum_{i=1}^{m} g_i(x(t)) v_i(t) \text{ with } v_i\in[-V_i,+V_i] . \]
In other words, the disturbances $(v_1,\ldots,v_m)$ lie in a coordinate-aligned box $[-V_1,+V_1]\times\cdots\times[-V_k,+V_m]$.
In many problems, the set $V$ containing $(v_1,\ldots,v_m)$ will not be box, but some more complicated set.
We could use our method directly to compute over-approximations to the solution set by taking an over-approximating bounding box $\widehat{V}$ to $V$, but this will typically yield extra solutions, even in the limit of small step size.
Instead, we seek to restrict solutions to those of the original system.

The right-hand-side of the differential inclusion is convex if, and only if, $V$ is a convex set, so it suffices to restrict to this case.
We can write
\[ V = \{ (v_1,\ldots,v_m) \mid v_i \in [-V_i,+V_i] \wedge c(v_1,\ldots,v_k)\leq 0 \} \]
where $c:\mathbb{R}^m\rightarrow\mathbb{R}$ is a convex function.
(More generally, we could consider the disjunction of several such constraints.)
The constraint $c$ yields restrictions on the form of the $w_i$.
For second-order estimates using
\[ w_{k,i}(t) = a_{k,i} = \frac{1}{h_k} \int_{t_k}^{t_{k+1}} v_i(t)\,dt \]
we simply need to introducte the constraints
\be \label{eqn:constraints} c(a_{k,1},\ldots,a_{k,m}) \leq 0 \ee
at every step.
For higher-order estimates, the relationship between the parameters and the constraint function may be more complicated; in particular, it need not be the case that $c(w_{k,1}(t),\ldots,w_{k,m}(t))\leq0$ holds.

\subsection{Pseudo-affine inputs}

In this section, we consider differential inclusions of the form

\be\label{ial}
 \dot x(t) = g(x(t)) + G(x(t))q(v(t)),\,\,\,x(0)=x_0,\,\,\, v(t)\in V
\ee

\noindent where $V$ is compact, convex subset of $\mathbb{R}^m$, and $g:\mathbb{R}^n\rightarrow \mathbb{R}^n$,
$G:\mathbb{R}^n \rightarrow \mathbb{R}^{n \times p}$, and $q:\mathbb{R}^m \rightarrow \mathbb{R}^p$.
The inclusion above can be viewed in two different ways.

 One way is to consider
the right-hand side as a function which is non-linear in the input. For example,
consider a one-dimensional polynomial system with inputs,
\[
\dot x(t) = x^7\, v_1^2 + x\, v_2^2 + x^3\,v_1\,v_2 + x^5,\,\,\, (v_1,v_2)\in V \subset \mathbb{R}^2.
\]

\noindent This has a form $g(x) + G(x)q(v)$ by taking
$g(x)=x^5$, $G(x)=(x^7,\, x,\,x^3)$, and $q(v) = (q_1(v),q_2(v),q_3(v)) = (v_1^2,\,v_2^2,\, v_1\,v_2)$.

 The other way is to
consider the right-hand side as a function which is linear in the input

\[
 \dot x(t) \in g(x(t)) + G(x(t))r(t),\,\,\,r(t)\in q(V)
\]

\noindent This corresponds to the case where, in general $V$, i.e., $q(V)$ above, is convex, but not necessary a box as it was assumed in the
previous section. For example, we can consider $V$ given by constraints such as
$V=\{v(t)\,|\,c(x(t),v(t))\le 0\}$ or $V=\{v(t)\,|\,e(x(t),v(t))=0\}$ for some
continuous functions $c(\cdot)$ and $e(\cdot)$.

 In order to compute reachable sets of the system~\eqref{ial}, we proceed as in the previous section.
First we construct an ``approximate'' system

\[
 \dot y(t) = g(y(t)) + G(y(t))w(t),
\]

\noindent and then get an error on the approximation.
The local error will be essentially obtained in the same way as before,
i.e., Theorems \ref{case1}-\ref{case3b}, but with certain additional assumptions.
To see what the assumptions should be, suppose that we want to get an error as in Theorem \ref{case2b}.
Then $w(t)=(w_1(t),...,w_m(t))$ is affine and satisfies
the integral equalities

\begin{gather*}
      \int_{t_k}^{t_{k+1}} q(v(t)) - w(t) \, dt = 0 \\
      \int_{t_k}^{t_{k+1}} t\,(q(v(t))-w(t)) \, dt=0.
\end{gather*}
\noindent As before, we get
\begin{gather*}
      a =(a_1,...,a_m)= \frac{1}{h} \int_{t_k}^{t_{k+1}} q(v(t)) dt\\
      b =(b_1,...,b_m)= \frac{12}{h^3} \int_{t_k}^{t_{k+1}} q(v(t))(t - t_{k+1/2}) dt
\end{gather*}

\noindent Obviously, we can take box over-approximations for $a$ and $b$, and obtain
over approximations of the reachable sets. However, if $q$ is nonlinear, or
$V$ is not a box, but some general convex set, then box over-approximations
for $a$ and $b$ could result in large over-approximation of the reachable sets.
Therefore, if the set $q(V)$ satisfies additonal assumptions, we can get optimal results
for the parameters $a$ and $b$. For example, if
$q(V)$ is a convex set, centered around the origin, we get $a\in q(V)$
and $b\in (3/h)q(V)$, which gives optimal bounds for the coefficients $a$ and $b$.

\section{Numerical Results}\label{num}

We now illustrate the use of our algorithm by computing reachable sets for some simple systems.

\medskip
\subsection{Van Der Pol Oscillator}\label{VDP}
We consider perturbed Van der Pol oscillator given by
\begin{align*}
 & \dot x= y\\
 & \dot y= -x + 2\, (1-x^2)\,y + v,
\end{align*}
\noindent where $v$ represents additive noise. 
We use the method described in Section~\ref{sec:twoparametererror} and the error bound~\eqref{ine} for additive inputs.
If we take $D=[0,2]\times[-1,3]$ to be the region of computation, then
we get $K=20$, $L=31$, $\Lambda=27$, and $H=12$. In addition, if we assume that
$v(\cdot)\in [-0.08,0.08]$, i.e., $A=0.08$, we obtain
\[
\epsilon=\|x(t_{k+1})-y(t_{k+1})\|\le 11.24\,h^3 + 168.17\, h^3\,\frac{e^{27\,h}-1}{27h}
\]
We use the algorithm described in Section~\ref{Algo} to compute
the solution set
for the set of initial points $X_0=[0.1,0.105]\times[1.5,1.505]$ over the time interval
$[0,1.5]$. Because the bounds $K,\,L,\,\Lambda$, and $H$ are rather large,
we use fairly small step size, $h=0.001$, yielding an analytical single-step error
of $\epsilon = 1.817092608\times 10^{-7}$.
In Figures~\ref{pvdp1} and \ref{pvdp2} we show solution set of the perturbed
Van der Pol oscillator using the above values.
In figure \ref{pvdp1}, splitting of the domain was performed at $t_1=0.6$ and $t_2=1.2$.
At $t_1$ the set was divided in half along $x$-axis, and at $t_2$
the set was divided in half along $y$-axis.
The computed reachable set after $T=1.5$ is a union of the following four sets:
\begin{align*} R(X_0,T) \ \subset \ &[1.46104,1.66704]\times [-0.482307,-0.272922] \\ &\qquad \;\cup\; [1.60834,1.80823]\times[-0.438931,-0.263936] \\ &\qquad\qquad \;\cup\; [1.50247,1.70832]\times[-0.466819,-0.269152] \\ &\qquad\qquad\qquad \;\cup\; [1.65202,1.8518]\times[-0.424135,-0.259941] . \end{align*}
Moreover, if there was no splitting performed the reachable set at $T=1.5$ is then
\[ R(X_0,T)\subset [1.43018,1.88571]\times[-0.513789,-0.197579] , \] 
and the computed solution set is presented in \ref{table:pho2}. 
From the results obtained, it turns out that the reachable set was smaller when splitting was performed. 

Note that the set $D$ in this case was chosen approximately, so that for initial condition $X_0$ and time of computation $T=1.5$,
the solution set of the differential inclusion stays inside $D$. This is done so that analytical error does not have to be recomputed
at each time step. In general, it is not necessary to know {\it a-priori} the region of computation.
In fact, at each time step, we can check  whether the reachable set is inside $D$, if not, we can choose new $D$ and recompute the error accordingly.

\bigskip

\begin{figure}[h]
\centering
  \begin{minipage}[t]{0.45\linewidth}
  \includegraphics[width=2.5in]{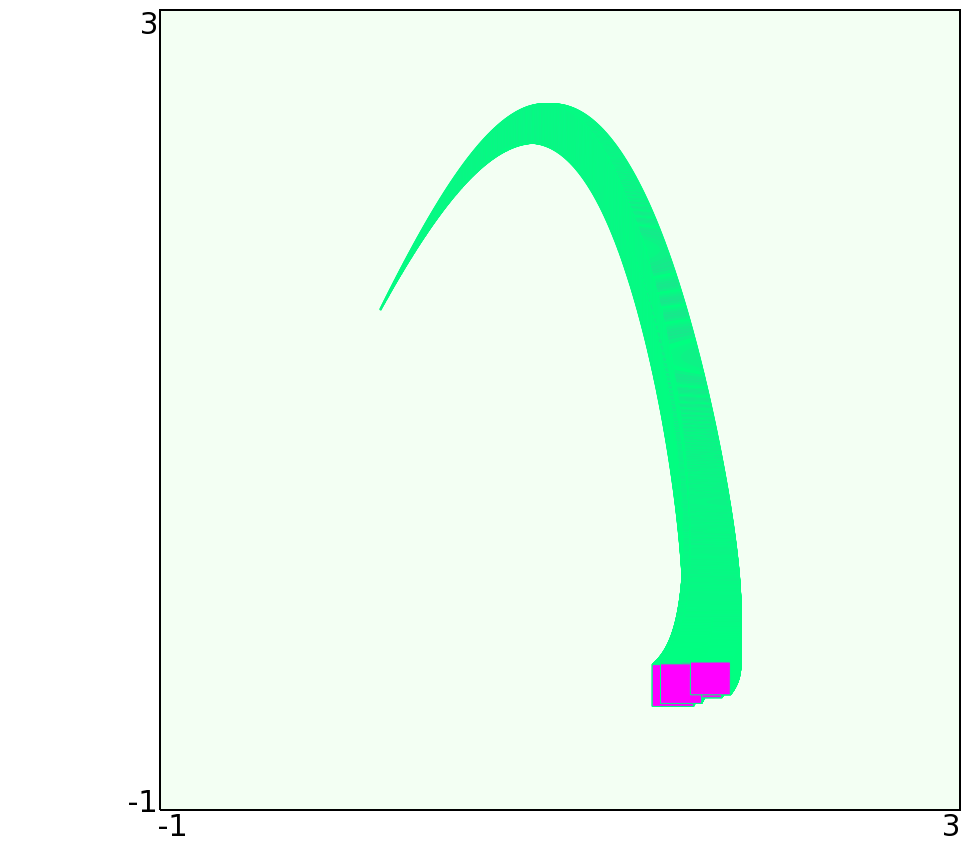}
  \caption{Evolution of the Perturbed Van der Pol Oscillator: splitting performed at $t_1=0.6$ and $t_2=1.2$.}
  \label{pvdp1}
  \end{minipage}%
  \hspace{0.5in}%
  \begin{minipage}[t]{0.45\linewidth}
     \includegraphics[width=2.5in]{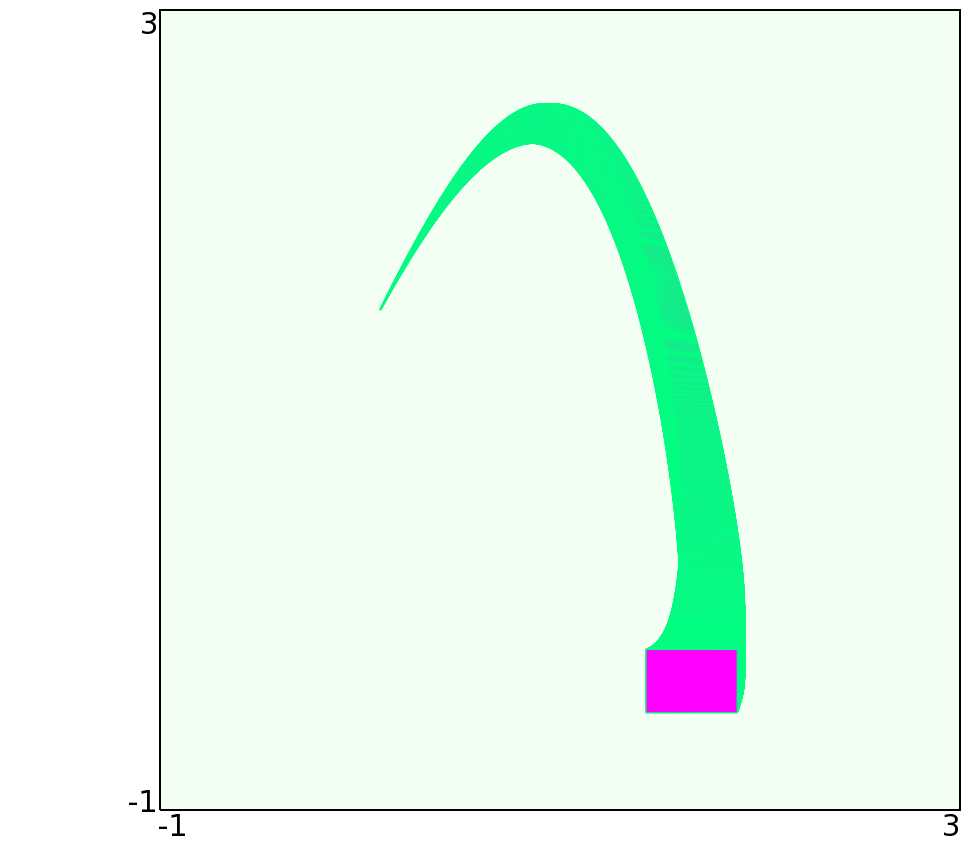}
     \caption{Evolution of the Perturbed Van der Pol Oscillator: no splitting performed.}
  \label{pvdp2}
  \end{minipage}
\end{figure}

 Figures~\ref{pvdp1} and~\ref{pvdp2} show that our method is effective in practice for computing rigorous over-approximations
of the solution sets of nonlinear differential inclusions. To prove this, we compare the results
of computation of the algorithm presented here with
the ones given in \cite{KZ}.

\subsection{Perturbed Harmonic Oscillator}\label{PHO}

The equations for the perturbed harmonic oscillator are given by
\begin{align*}
 & \dot x= y + v_1\\
 & \dot y= -x + v_2,
\end{align*}
\noindent where $v_i$'s represent bounded noise.  Suppose that the range of $v_1$ and $v_2$
is $[-A_1,A_1]$ and $[-A_2,A_2]$ respectively. Notice that noise is additive, and
therefore we can use formula \eqref{ine} to compute the (analytical) error.  
In terms of our general set up we have $f(x,y)=(y,-x)$, $g_i=1$, for $i=1,2$. 
Hence, we get $\Lambda=1$, $L=1$, $H=0$, and $K'=A_1+A_2$. The one step time error is then given by the 
following formula  
\[
\epsilon=\frac{7\,h^3\,}{4(2-h)}\,\frac{e^h-1}{h}\,\max\{A_1, A_2\}.
\]
%
For comparison purposes, Table \ref{table:pho1} is equivalent to a table given in \cite{KZ}.
The total time of computation is $T=2\pi$, $A_1=0$, and initial condition is the box $(1,0)+[-\delta,\delta]^2$.
Note that diameter of the set $[a_1,a_2]\times [b_1,b_2]\in \mathbb{R}^2$
is $\max \{a_2-a_1, b_2-b_1\}$, and radius of the set is half of diameter.

\noindent From Table \ref{table:pho1}, one can see that in most cases our results are better then one obtained
in \cite{KZ}. 
In case~\ref{case:largestep}, the time step is $h=2\pi/9=0.698131$, for which the analytical error $\epsilon=0.066170$ is too large to hope for sharp results.
In case~\ref{case:smallstep}, handling the large number of time steps requires more sophisticated techniques for simplifying the representation of the intermediate sets than are currently used in our code, and this is the major contribution to the error.

\newcounter{testcase}
\begin{table}[ht]
\caption{Perturbed Harmonic Oscillator $T=2\pi$}   
\centering 
\vspace{0.1cm}                         
\begin{tabular}{c |  c | c | c | c | c }            
\hline\hline                     
case & $A_2$ & $\delta$ & num. of steps & Our Diameter & Diameter in \cite{KZ}\\ [0.3ex]   
\hline                              
\refstepcounter{testcase}\arabic{testcase}\label{case:largestep} & 0.1 & 0.01 & 9 & {\bf 3.91258} & 1.178825 \\               
\refstepcounter{testcase}\arabic{testcase} & 0.1 & 0.01 & 100 & {\bf 0.8382630} & 0.8453958 \\
\refstepcounter{testcase}\arabic{testcase}\label{case:smallstep} & 0.1 & 0.01 & 1000 & {\bf 65.4376} & 0.8225159 \\
\hline     
\refstepcounter{testcase}\arabic{testcase} & 0.1 & 0 & 100 & {\bf 0.8186080} & 0.8253958 \\               
\refstepcounter{testcase}\arabic{testcase} & 0.1 & 0.01 & 100 & {\bf 0.8382630} & 0.8453958 \\
\refstepcounter{testcase}\arabic{testcase} & 0.1 & 0.1 & 100 & {\bf 1.018708} & 1.025396 \\
\hline 
\refstepcounter{testcase}\arabic{testcase} & 0.01 & 0.01 & 100 & {\bf 0.1018380} & 0.1025396 \\
\refstepcounter{testcase}\arabic{testcase} & 0.1 & 0.01 & 100 & {\bf 0.8382630} & 0.8453958 \\   
\refstepcounter{testcase}\arabic{testcase} & 1 & 0.01 & 100 & {\bf 8.205280}  & 8.273958 \\
\hline                         
\end{tabular}
\label{table:pho1}          
\end{table}

 When both $A_1$ and $A_2$ are nonzero, i.e. $A_1=A_2=0.1$, our results and results from
\cite{KZ} are given in Table \ref{table:pho2}. 
Here, we present results only for smaller time steps, even though in \cite{KZ} the results were given for time steps up to $h=0.799$. 
We give both second-order and third-order local error estimates.
We can see from Table \ref{table:pho2} that for $h=0.25$ we are starting to get significantly worse results then in \cite{KZ}, but for smaller time steps the results are comparable. 
Here, the total time of computation is $T=h$ (one time step), and $\delta=0$.

\begin{table}[htb]
\caption{Perturbed Harmonic Oscillator $T=h$}   
\centering 
\vspace{0.1cm}                         
\begin{tabular}{c |  c | c | c | c }            
\hline\hline                     
case & h & Our Radius(2) &  Our Radius(3) & Radius in \cite{KZ}\\ [0.3ex]   
\hline                              
1 & 0.25 & {\bf 0.0420586} & {\bf 0.0313667}  & 0.0284025 \\           
2 & 0.1 & {\bf 0.0125864} & {\bf 0.0108419} & 0.0105171 \\
3 & 0.01 & {\bf 0.00102509} & {\bf 0.00100759} & 0.00100502 \\
4 & 0.001 & {\bf 0.00010026} & {\bf 0.00010009} & 0.00010005 \\
\hline                         
\end{tabular}
\label{table:pho2}          
\end{table}

We see that the radius of the enclosure is dominated by the growth due to the noise in the differential inclusion.
The reason why our third-order error estimates give worse enclosures than those of~\cite{KZ} is unclear; however we note that the error estimates obtained there were computed exactly by hand, and our automated methods are better than those of~\cite{KZ} based on the logarithmic norm. Moreover, in~\cite{KZ} they use the 2-norm for the logarithmic norm which gives better results for this example.


\subsection{Rossler Equations}\label{RE}

The Rossler equations are given by
\begin{align*}
\dot x&=-(y+z) + v_1\\
\dot y&=x+0.2y + v_2\\
\dot z&=0.2+z(x-a) + v_3
\end{align*}
\noindent We aim to estimate the image of the initial set 
\[ X_0= \{0\} \times [-10.3\times 10^{-4},+10.3\times10^{-4}] \times [-0.03\times 10^{-4},+0.03\times10^{-4}]  \]
under the return map $P$ to the Poincar\'e  section $\Sigma=\{x=0, \,\dot{x}>0\}$ 
for the parameter value $a=5.7$ and noise $v_i \in [-10^{-4},10^{-4}]$ for $i=1,2,3$. 
Rather than compute the crossing time for each trajectory, we computed a time interval $T$ containing the first crossing time by comparing the sign of $x$ over the sets $R_k$, and used the estimate $\{0\}\times P(X_0) \subset R(X_0,T)$.

With time step $h=0.005$, total time $T=11.1$,  and region of computation $D=([-25,25],[-25,25],[-25,35])$, 
we obtain an analytical error of $e=8.586\cdot 10^{-8}$ and 
\[
R(X_0,T)=([-0.15572,0.15391],[-3.75926,-3.41772],[0.03139,0.03398]).
\]
\noindent In \cite{KZ}, $R(X_0,T)=([-0.211150,0.20888],[-3.69781,-3.47352],[0.03117,0.03327])$. (They did not specify the time step or the total time it took to compute the value of the poincare map $R(X_0,T)$.)
In this case neither of the sets is better then other, but they are comparable, and
hence we show that our algorithm can also provide good estimates when computing
over rather difficult regions, see \cite{KZ}.

\section{Concluding Remarks}\label{Disc}

 In this paper, we have given a numerical method for computing rigorous over-approximations of the reachable sets
of differential inclusions. The method gives high-order error bounds for single-step approximations, which is an improvement
of the first-order methods previously available. By providing improved control of local errors, the method allows for accurate computation of reachable sets over longer time intervals.

 We give several theorems for obtaining local errors of different orders.
It is easy to see that higher order errors (improved accuracy) require approximations
that have larger number of parameters (reduced efficiency). The growth of the number of parameters is an issue, in general. Sophisticated methods for handling this are at least as important as the single-step method.
The question remains as to approximate solution (Theorems \ref{case1}-\ref{case3b}) yields the best trade-off between local accuracy and efficiency for computing reachable sets.
The answer is not straightforward and most likely depends on the system itself.
In future work, we plan to investigate the efficiency of the algorithm on the number of parameters for various examples.

 We have only considered differential inclusions in the form of input-affine systems, and give a brief sketch of how these methods can be applied to other classes of system. We also plan to provide a more detailed exposition of the method in these cases. Moreover, the local error that we obtain is a uniform bound for the error in all components. It should be possible to give slightly better componentwise bounds.

\section{ACKNOWLEDGEMENTS}
This research was partially supported by the European Commission through the project ``Control for Coordination of Distributed Systems'' (C4C) as part of the EU.ICT program (challenge ICT-2007.3.7).


\begin{thebibliography}{99}

\bibitem{A}
Ariadne. http://trac.parades.rm.cnr.it/ariadne/

\bibitem{AC}
Aubin, J.P., and A., Cellina, Differential inclusions. Set-valued maps and viability theory.
Grundlehren der Mathematischen Wissenschaften [Fundamental Principles of Mathematical Sciences], 264. Springer-Verlag, Berlin, 1984.


\bibitem{BM}
Baier, R., and Gerdts, M., A Computational Method for Non-Convex Reachable Sets Using Optimal Control.
Proceedings of the European Control Conference 2009, Budapest, Hungary, 2009, 97-102

\bibitem{BR}
Beyn, W.-J., and Rieger, J.,
Numerical fixed grid methods for differential inclusions. (English summary)
{\it Computing.} 81 (2007), no. 1, 91-106.

\bibitem{CG}
Collins, P., and Graca, D. S. Effective computability of solutions of differential inclusions: the ten thousand monkeys approach.
{\it J.UCS}  15  (2009),  no. 6, 1162-1185.

\bibitem{CNR}
Collins, P., Niqui, M., and Revol, N.,
A Taylor Function Calculus for Hybrid System Analysis: Validation in Coq
{\it NSV-3: Third International Workshop on Numerical Software Verification}. 2010.

\bibitem{D}
Dahlquist, G. Stability and Error Bounds in the Numerical Integration of Ordinary Differential
Equations, Almqvist and Wiksells, Uppsala, 1958; Transactions of the Royal Institute
of Technology, Stockholm, 1959.

\bibitem{De}
Deimling, K., {\it Multivalued differential equations.}
De Gruyter Series in Nonlinear Analysis and Applications, 1. Walter de Gruyter and Co., Berlin, 1992.

\bibitem{DL}
Dontchev, A.; Lempio, F., Difference methods for differential inclusions: a survey.  {\it SIAM Rev.}  34  (1992),  no. 2, 263-294.

\bibitem{Dt}
Dontchev, T. Euler approximation of nonconvex discontinuous differential inclusions. (English summary)
{\it An. Stiint. Univ. Ovidius Constanta Ser. Mat.} 10 (2002), no. 1, 73--86.

\bibitem{DF}
Dontchev, A. L.; Farkhi, E. M. Error estimates for discretized differential inclusion.  {\it Computing.}  41  (1989),  no. 4, 349-358.

\bibitem{F}
Filippov, A. F. {\it Differential equations with discontinuous righthand sides.}
Translated from the Russian. Mathematics and its Applications (Soviet Series), 18. Kluwer Academic Publishers Group, Dordrecht, 1988.

\bibitem{Fl}
Fliess, M., Fonctionnelles causales non lin\'eaires et ind\'etermin\'es non commutatives.
{\it Bull. Soc. Math. France} 109 (1981), 3-40.

\bibitem{G}
Grammel, G., Towards fully discretized differential inclusions.  {\it Set-Valued Anal.}  11  (2003),  no. 1, 1-8.

\bibitem{GK}
Gr\"une, L., Kloeden, P. E., Higher order numerical schemes for affinely controlled nonlinear systems.
{\it Numer. Math.} 89 (2001), no. 4, 669–690.

\bibitem{HNW}
Hairer, E.; Nørsett, S. P.; Wanner, G. {\it Solving ordinary differential equations. I.
Nonstiff problems.} Springer Series in Computational Mathematics, 8. Springer-Verlag, Berlin, 1987

\bibitem{KZ}
Kapela, T. A.; Zgliczynski, P., A Lohner-type  algorithm for control systems and ordinary
differential inclusions. {\it Discrete Contin. Dyn. Syst. Ser. B } 11  (2009),  no. 2, 365--385.

\bibitem{Li}
Li, Desheng. Morse decompositions for general dynamical systems and differential inclusions
with applications to control systems.  SIAM J. Control Optim.  46  (2007),  no. 1, pp. 35-60.

\bibitem{L}
Lozinskii, S.M. Error estimates for the numerical integration of ordinary differential
equations, part I, Izv. Vyss. Uceb. Zaved Matematika, 6 (1958), pp. 52–90 (Russian).

\bibitem{KV}
Kurzhanski, A.; Valyi, I.,
{\it Ellipsoidal calculus for estimation and control.} (English summary)
Systems and Control: Foundations and Applications. Birkhäuser Boston, Inc., Boston, MA;
International Institute for Applied Systems Analysis, Laxenburg, 1997.

\bibitem{MB}
Makino, K., and Berz, M., Taylor Models and Other Validated Functional Inclusion Methods.
{\it International Journal of Pure and Applied Mathematics} 4 (2003), no. 4,  379-456

\bibitem{N}
Nieuwenhuis, J. W. Some remarks on set-valued dynamical systems.
J. Austral. Math. Soc. Ser. B  22  (1981), no. 3, pp. 308-313.

\bibitem{PBV}
Puri, A.; Borkar, V. and Varaiya, P., $\epsilon$-approximation of differential inclusions,
{\it Proc. of the 34th IEEE Conference on Decision and Control} (1995), pp. 2892–2897.

\bibitem{RBM}
Revol, N.; Makino, K.; Berz, M. Taylor models and floating-point arithmetic:
proof that arithmetic operations are validated in COSY.  {\it J. Log. Algebr. Program.}  64  (2005),  no. 1, 135--154.


\bibitem{S}
Smirnov, G. V., {\it Introduction to the theory of differential inclusions.}
Graduate Studies in Mathematics, 41. American Mathematical Society, Providence, RI, 2002.

\bibitem{Sg}
Söderlind, G. The logarithmic norm. History and modern theory. (English summary)
BIT 46 (2006), no. 3, 631--652.

\bibitem{Wa}
Wade, R. W., {\it An Introduction to Analysis.} Pearson Prentice Hall, Upper Saddle River, NJ, 2009.

\bibitem{W}
Weihrauch, K., {\it Computable analysis. An introduction.}
Texts in Theoretical Computer Science. An EATCS Series. Springer-Verlag, Berlin, 2000.



\end{thebibliography}
\end{document}